\newcommand{\globalcolor}[1]{%
  \color{#1}\global\let\default@color\current@color
}
\definecolor{blush}{rgb}{0.87, 0.36, 0.51}
	\definecolor{brightcerulean}{rgb}{0.11, 0.67, 0.84}
	\definecolor{greenryb}{rgb}{0.4, 0.69, 0.2}
\newif\ifdark
\definecolor{darkred}{rgb}{0.9,0.2,0.2}
\definecolor{darkblue}{rgb}{0.7,0.3,1}
\definecolor{darkgreen}{rgb}{0.1,0.9,0.1}
\definecolor{franck}{rgb}{0,0.8,1}
\definecolor{pagebackground}{rgb}{.15,.21,.18}
\definecolor{pageforeground}{rgb}{.84,.84,.85}
\definecolor{symbols}{rgb}{0,0.7,1}
\colorlet{connection}{red!80!black}
\colorlet{boxcolor}{blue!50}
\definecolor{darkred}{rgb}{0.7,0.1,0.1}
\definecolor{darkblue}{rgb}{0.4,0.1,0.8}
\definecolor{darkgreen}{rgb}{0.1,0.7,0.1}
\definecolor{franck}{rgb}{0,0,1}
\definecolor{pagebackground}{rgb}{1,1,1}
\definecolor{pageforeground}{rgb}{0,0,0}
\colorlet{symbols}{blue!90!black}
\colorlet{connection}{red!30!black}
\colorlet{boxcolor}{blue!50!black}
\def\slash{\leavevmode\unskip\kern0.18em/\penalty\exhyphenpenalty\kern0.18em}
\def\dash{\leavevmode\unskip\kern0.18em--\penalty\exhyphenpenalty\kern0.18em}
\DeclareMathAlphabet{\mathbbm}{U}{bbm}{m}{n}
\DeclareFontFamily{U}{BOONDOX-calo}{\skewchar\font=45 }
\DeclareFontShape{U}{BOONDOX-calo}{m}{n}{
  <-> s*[1.05] BOONDOX-r-calo}{}
\DeclareFontShape{U}{BOONDOX-calo}{b}{n}{
  <-> s*[1.05] BOONDOX-b-calo}{}
\DeclareMathAlphabet{\mcb}{U}{BOONDOX-calo}{m}{n}
\SetMathAlphabet{\mcb}{bold}{U}{BOONDOX-calo}{b}{n}
\setlist{noitemsep,topsep=4pt,leftmargin=1.5em}
\DeclareMathAlphabet{\mathbbm}{U}{bbm}{m}{n}
\DeclareMathAlphabet{\mcb}{U}{BOONDOX-calo}{m}{n}
\SetMathAlphabet{\mcb}{bold}{U}{BOONDOX-calo}{b}{n}
\DeclareFontFamily{U}{mathx}{\hyphenchar\font45}
\DeclareFontShape{U}{mathx}{m}{n}{
      <5> <6> <7> <8> <9> <10>
      <10.95> <12> <14.4> <17.28> <20.74> <24.88>
      mathx10
      }{}
\DeclareSymbolFont{mathx}{U}{mathx}{m}{n}
\DeclareMathSymbol{\bigtimes}{1}{mathx}{"91}
\def\s{\mathfrak{s}}
\providecommand{\figures}{false}
{ \ifthenelse{\equal{\figures}{false}} {#1}{\[ {\rm Figure \ missing !} \]} }{}
\def\CH{\mathcal{H}}
\def\CT{\mathcal{T}}
\tikzstyle{tinydots}=[dash pattern=on \pgflinewidth off \pgflinewidth]
\tikzstyle{superdense}=[dash pattern=on 4pt off 1pt]
\newcommand{\beq}{\begin{equation}}
\newcommand{\eeq}{\end{equation}}
\newcommand{\mfT}{\mathfrak{T}}
\def\Labe{\mathfrak{e}}
\def\Labn{\mathfrak{n}}
\def\${|\!|\!|}
\newenvironment{DIFnomarkup}{}{} 
\newcommand{\rrightarrow}{{\to\hskip -4.9mm\raise 1pt\hbox{$\to$}}}
\newfont{\indic}{bbmss12}
\def\Nabla_#1{\nabla_{\!#1}}
    \pgfmathsetlength{\pgf@xb}{\pgfkeysvalueof{/pgf/outer xsep}}%
    \pgfmathsetlength{\pgf@yb}{\pgfkeysvalueof{/pgf/outer ysep}}%
\def\symbol#1{\textcolor{symbols}{#1}}
\def\decorate#1#2{
        \ifnum#2>0
    		\foreach \count in {1,...,#2}{
	       	let
				\p1 = (sourcenode.center),
                \p2 = (sourcenode.east),
				\n1 = {\x2-\x1},
				\n2 = {1mm},
				\n3 = {(1.3+0.6*(\count-1))*\n1},
				\n4 = {0.7*\n1}
			in 
        		node[rectangle,fill=symbols,rotate=30,inner sep=0pt,minimum width=0.2*\n2,minimum height=\n2] at ($(sourcenode.center) + (\n3,\n4)$) {}
				}
		\fi
        \ifnum#1>0
    		\foreach \count in {1,...,#1}{
	       	let
				\p1 = (sourcenode.center),
                \p2 = (sourcenode.east),
				\n1 = {\x2-\x1},
				\n2 = {1mm},
				\n3 = {(1.3+0.6*(\count-1))*\n1},
				\n4 = {0.7*\n1}
			in 
        		node[rectangle,fill=symbols,rotate=-30,inner sep=0pt,minimum width=0.2*\n2,minimum height=\n2] at ($(sourcenode.center) + (-\n3,\n4)$) {}
				}
		\fi
}
\tikzset{
    dectriangle/.style 2 args={
        triangle,
        alias=sourcenode,
        append after command={\decorate{#1}{#2}}
    },
    dectriangle/.default={0}{0},
}
\tikzset{
	cross/.style={path picture={ 
  		\draw[symbols]
			(path picture bounding box.south east) -- (path picture bounding box.north west) (path picture bounding box.south west) -- (path picture bounding box.north east);
		}},
root/.style={circle,fill=green!50!black,inner sep=0pt, minimum size=1.2mm},
        dot/.style={circle,fill=pageforeground,inner sep=0pt, minimum size=1mm},
        dotred/.style={circle,fill=pageforeground!50!pagebackground,inner sep=0pt, minimum size=2mm},
        var/.style={circle,fill=pageforeground!10!pagebackground,draw=pageforeground,inner sep=0pt, minimum size=3mm},
        kernel/.style={semithick,shorten >=2pt,shorten <=2pt},
        kernels/.style={snake=zigzag,shorten >=2pt,shorten <=2pt,segment amplitude=1pt,segment length=4pt,line before snake=2pt,line after snake=5pt,},
        rho/.style={densely dashed,semithick,shorten >=2pt,shorten <=2pt},
           testfcn/.style={dotted,semithick,shorten >=2pt,shorten <=2pt},
        renorm/.style={shape=circle,fill=pagebackground,inner sep=1pt},
        labl/.style={shape=rectangle,fill=pagebackground,inner sep=1pt},
        xic/.style={very thin,circle,draw=symbols,fill=symbols,inner sep=0pt,minimum size=1.2mm},
        g/.style={very thin,rectangle,draw=symbols,fill=symbols!10!pagebackground,inner sep=0pt,minimum width=2.5mm,minimum height=1.2mm},
        xi/.style={very thin,circle,draw=symbols,fill=symbols!10!pagebackground,inner sep=0pt,minimum size=1.2mm},
	xies/.style={very thin,rectangle,fill=green!50!black!25,draw=symbols,inner sep=0pt,minimum size=1.1mm},
	xiesf/.style={very thin,rectangle,fill=green!50!black,draw=symbols,inner sep=0pt,minimum size=1.1mm},
        xix/.style={very thin,crosscircle,fill=symbols!10!pagebackground,draw=symbols,inner sep=0pt,minimum size=1.2mm},
        X/.style={very thin,cross,rectangle,fill=pagebackground,draw=symbols,inner sep=0pt,minimum size=1.2mm},
	xib/.style={thin,circle,fill=symbols!10!pagebackground,draw=symbols,inner sep=0pt,minimum size=1.6mm},
	xie/.style={thin,circle,fill=green!50!black,draw=symbols,inner sep=0pt,minimum size=1.6mm},
	xid/.style={thin,circle,fill=symbols,draw=symbols,inner sep=0pt,minimum size=1.6mm},
	xibx/.style={thin,crosscircle,fill=symbols!10!pagebackground,draw=symbols,inner sep=0pt,minimum size=1.6mm},
	kernels2/.style={very thick,draw=connection,segment length=12pt},
	keps/.style={thin,draw=symbols,->},
	kepspr/.style={thick,draw=connection,->},
	krho/.style={thin,draw=symbols,superdense,->},
	krhopr/.style={thick,draw=connection,superdense},
	triangle/.style = { regular polygon, regular polygon sides=3},
	not/.style={thin,circle,draw=connection,fill=connection,inner sep=0pt,minimum size=0.5mm},
	diff/.style = {very thin,draw=symbols,triangle,fill=red!50!black,inner sep=0pt,minimum size=1.6mm},
	diff1/.style = {very thin,dectriangle={1}{0},fill=red!50!black,draw=symbols,inner sep=0pt,minimum size=1.6mm},
	diff2/.style = {very thin,dectriangle={1}{1},fill=red!50!black,draw=symbols,inner sep=0pt,minimum size=1.6mm},
		diffmini/.style = {very thin,rectangle,fill=black,draw=black,inner sep=0pt,minimum size=0.75mm},
	 kernelsmod/.style={very thick,draw=connection,segment length=12pt},
	 rec/.style = {very thin,rectangle,fill=black,draw=black,inner sep=0pt,minimum size=2mm},
	cerc/.style={very thin,circle,draw=black,fill=symbols,inner sep=0pt,minimum size=2mm},
	stars/.style={very thin,star,star points=6,star point ratio=0.5, draw=black,fill=red,inner sep=0pt,minimum size=0.7mm},
	>=stealth,
        }
        \tikzset{
root/.style={circle,fill=black!50,inner sep=0pt, minimum size=3mm},
        circ/.style={circle,fill=white,draw=black,very thin,inner sep=.5pt, minimum size=1.2mm},
        round1/.style={fill=white,outer sep = 0,inner sep=2pt,rounded corners=1mm,draw,text=black,thin,minimum size=1.2mm},
          circ1/.style={circle,fill=red!10,draw=red,very thin,inner sep=.5pt, minimum size=1.2mm},
        rect/.style={fill=white,outer sep = 0,inner sep=2pt,rectangle,draw,text=black,thin,minimum size=1.2mm},
        rect1/.style={fill=white,outer sep = 0,inner sep=2pt,rectangle,draw,text=black,thin,minimum size=1.2mm},
        round2/.style={fill=red!10,outer sep = 0,inner sep=2pt,rounded corners=1mm,draw,text=black,thin,minimum size=1.2mm},
       round3/.style={fill=blue!10,outer sep = 0,inner sep=2pt,rounded corners=1mm,draw,text=black,thin,minimum size=1.2mm}, 
        rect2/.style={fill=black!10,outer sep = 0,inner sep=2pt,rectangle,draw,text=black,thin,minimum size=1.2mm},
        dot/.style={circle,fill=black,inner sep=0pt, minimum size=1.2mm},
        dotred/.style={circle,fill=black!50,inner sep=0pt, minimum size=2mm},
        var/.style={circle,fill=black!10,draw=black,inner sep=0pt, minimum size=3mm},
        kernel/.style={semithick,shorten >=2pt,shorten <=2pt},
         diag/.style={thin,shorten >=4pt,shorten <=4pt},
        kernel1/.style={thick},
        kernels/.style={snake=zigzag,shorten >=2pt,shorten <=2pt,segment amplitude=1pt,segment length=4pt,line before snake=2pt,line after snake=5pt,},
		kernels1/.style={snake=zigzag,segment amplitude=0.5pt,segment length=2pt},
		rho1/.style={densely dotted,semithick},
        rho/.style={densely dashed,semithick,shorten >=2pt,shorten <=2pt},
           testfcn/.style={dotted,semithick,shorten >=2pt,shorten <=2pt},
           visible/.style={draw, circle, fill, inner sep=0.25ex},
        renorm/.style={shape=circle,fill=white,inner sep=1pt},
        labl/.style={shape=rectangle,fill=white,inner sep=1pt},
        xic/.style={very thin,circle,fill=symbols,draw=black,inner sep=0pt,minimum size=1.2mm},
        xi/.style={very thin,circle,fill=blue!10,draw=black,inner sep=0pt,minimum size=1.2mm},
	xib/.style={very thin,circle,fill=blue!10,draw=black,inner sep=0pt,minimum size=1.6mm},
	xie/.style={very thin,circle,fill=green!50!black,draw=black,inner sep=0pt,minimum size=1mm},
	xid/.style={very thin,circle,fill=symbols,draw=black,inner sep=0pt,minimum size=1.6mm},
	edgetype/.style={very thin,circle,draw=black,inner sep=0pt,minimum size=5mm},
	nodetype/.style={very thick,circle,draw=black,inner sep=0pt,minimum size=5mm},
	kernels2/.style={very thick,draw=connection,segment length=12pt},
clean/.style={thin,circle,fill=black,inner sep=0pt,minimum size=1mm},	not/.style={thin,circle,fill=symbols,draw=connection,fill=connection,inner sep=0pt,minimum size=0.8mm},
	>=stealth,
        }
\def\DeclareSymbol#1#2#3{%
	\expandafter\gdef\csname MH@symb@#1\endcsname{\tikzsetnextfilename{symbol#1}%
	\tikz[baseline=#2,scale=0.15,draw=symbols,line join=round]{#3}}%
	\expandafter\gdef\csname MH@symb@#1s\endcsname{\scalebox{0.75}{\tikzsetnextfilename{symbol#1}%
	\tikz[baseline=#2,scale=0.15,draw=symbols,line join=round]{#3}}}%
	\expandafter\gdef\csname MH@symb@#1ss\endcsname{\scalebox{0.65}{\tikzsetnextfilename{symbol#1}%
	\tikz[baseline=#2,scale=0.15,draw=symbols,line join=round]{#3}}}%
	}
\def\<#1>{\ifthenelse{\boolean{mmode}}{\mathchoice{\csname MH@symb@#1\endcsname}{\csname MH@symb@#1\endcsname}{\csname MH@symb@#1s\endcsname}{\csname MH@symb@#1ss\endcsname}}{\csname MH@symb@#1\endcsname}}
 \def\1{\mathbf{\symbol{1}}}
\def\one{\mathbf{1}}
\DeclareMathAlphabet{\mathpzc}{OT1}{pzc}{m}{it}
\def\eqref#1{(\ref{#1})}
\newcommand*{\bigcdot}{}
\DeclareRobustCommand*{\bigcdot}{%
  \mathbin{\mathpalette\bigcdot@{}}%
}
\newcommand*{\bigcdot@scalefactor}{.5}
\newcommand*{\bigcdot@widthfactor}{1.15}
\newcommand*{\bigcdot@}[2]{%
  \sbox0{$#1\vcenter{}$}
  \sbox2{$#1\cdot\m@th$}%
  \hbox to \bigcdot@widthfactor\wd2{%
    \hfil
    \raise\ht0\hbox{%
      \scalebox{\bigcdot@scalefactor}{%
        \lower\ht0\hbox{$#1\bullet\m@th$}%
      }%
    }%
    \hfil
  }%
}
\def\two{{\<generic>\kern0.05em\<genericb>}}
\def\twoI{{\<Ito>\kern0.05em\<Itob>}}
\def\st{\mathsf{fgt}}
\def\mail#1{\burlalt{#1}{mailto:#1}}
\begin{document}

\def\st{\mathsf{fgt}}
\def\mail#1{\burlalt{#1}{mailto:#1}}
\title{Post-Lie algebras in Regularity Structures}
\author{Yvain Bruned$^1$, Foivos Katsetsiadis$^2$}
\institute{ 
 IECL (UMR 7502), Université de Lorraine
 \and University of Edinburgh \\
Email:\ \begin{minipage}[t]{\linewidth}
\mail{yvain.bruned@univ-lorraine.fr},
\\ \mail{F.I.Katsetsiadis@sms.ed.ac.uk}.
\end{minipage}}
\def\dsqcup{\sqcup\mathchoice{\mkern-7mu}{\mkern-7mu}{\mkern-3.2mu}{\mkern-3.8mu}\sqcup}

\maketitle

\begin{abstract}

\ \ \ \ In this work, we construct the deformed Butcher-Connes-Kreimer  Hopf algebra coming from the theory of Regularity Structures as the universal envelope of a post-Lie algebra. We show that this can be done using either of the two combinatorial structures that have been proposed in the context of singular SPDEs: decorated trees and multi-indices. Our construction is inspired from multi-indices where the Hopf algebra was obtained as the universal envelope of a Lie algebra and it has been proved that one can find a basis that is symmetric with respect to certain elements. We show that this Lie algebra comes from an underlying post-Lie structure.

\end{abstract}

\setcounter{tocdepth}{1}
\tableofcontents

\section{ Introduction }

 \ \ \ \ \ \  Regularity Structures were introduced by Martin Hairer in \cite{reg} and are nowadays able to provide well-posedness to a large class of singular stochastic partial differential equations (SPDEs). This is performed via the theory developed in \cite{BHZ,CH16,BCCH}, which may be seen as a black box that constructs Taylor-type expansions of solutions to these singular dynamics. These expansions rely on a description by means of decorated trees that provide an abstract representation of the iterated integrals appearing in the series expansion of the solution in the smooth setting. In \cite{BHZ}, analytical operations on these expansions such as recentering and renormalisation are performed via Hopf algebras which are close in spirit to the Butcher-Connes-Kreimer Hopf algebra \cite{Butcher72,CK1,CK2} (recentering) and the extraction-contraction Hopf algebra \cite{CEM} (renormalisation). One of the crucial points of this algebraic approach is the cointeraction between the two Hopf algebras involved in \cite{reg,BHZ} which is reminiscent of the cointeraction proven in \cite{CEM} and observed at a group theoretic level in numerical analysis. The reader is referred to \cite{CHV10} for a review of its applications to numerical analysis. We also refer the reader to \cite{FrizHai,BaiHos} for long surveys and to \cite{EMS} for a short survey on the theory of Regularity Structures.

 In this paper, we shall provide a strong link between the Hopf algebras involved in the theory and the notion of post-Lie algebra. Post-Lie algebras appear naturally in the context of an affine connection with constant torsion and vanishing curvature (see \cite{ML13,ELM}). They were first mentioned in \cite{Val,ML08} on the partition of posets and in the context of Lie-Butcher series. They have also been used in many works in numerical analysis (see \cite{ML13,FM,CEO,AEM,AEMM}).
 
  We begin by explaining how this type of structure can appear in the context of singular SPDEs.
In \cite{BM22}, a deformation of the grafting pre-Lie product was introduced by Bruned and Manchon that gives the pre-Lie product defined in \cite{BCCH}. Then, using the result of Guin-Oudom  \cite{Guin1,Guin2}, the authors construct the Hopf algebra that encodes the combinatorics of recentering by taking a suitable quotient of the Lie envelope of the plugging pre-Lie algebra. This procedure produces a deformation of the Grossman-Larson product on trees \cite{GL}. Then, the extraction-contraction Hopf algebra given in \cite{BHZ} is obtained from this deformed product. Indeed, it produces a pre-Lie product and together with the Guin-Oudom procedure, one obtains the extraction-contraction Hopf algebra. This approach also gives the cointeraction at the level of the deformed pre-Lie products. 
This new deformation formalism has been crucial in \cite{BB21,BB21b} for providing a simple proof of the renormalised equation that works in the non-translation invariant setting. The proof relies on the local renormalisation maps introduced \cite{BR18}. It also inspires the development of the Hopf algebra of multi-indices suitable for quasilinear SPDEs in \cite{OSSW,LOT,LOTT}. In particular,  in \cite{LOT}, the authors construct a product operation on the universal enveloping algebra of a Lie algebra of derivations by using a procedure that is similar in spirit to the one developed by Guin and Oudom.

 \ \ The main contribution of this paper is to unify the construction of the Hopf algebras appearing in the theory of Regularity Structures as well as their presentation by means of a distinguished basis, by viewing them as the Lie enveloping algebras of suitable post-Lie algebras and to show that the construction of the associative product on the universal envelope of $L$ in \cite{LOT} together with the subsequent attainment of a partially symmetric basis, can also be seen as taking the universal envelope of a particular post-Lie algebra and exploiting the isomorphism induced by virtue of the result in \cite{ELM}. The post-Lie perspective provides a cleaner construction than the one given in \cite{BM22} where one had to make the appropriate identifications after taking the universal envelope over the plugging pre-Lie algebra. Our construction replaces the plugging operation by a post-Lie structure that is intimately related to the deformed grafting product. 
  
  The appearance of a post-Lie structure reflects the fact that the formal differentiation operators obey certain non-commutative relations which necessitate the move from pre-Lie algebras to this more general setting. It is our contention that this idea opens new perspectives in the field due to the intricate interplay between the combinatorial, algebraic and geometric nature of the post-Lie category. A post-Lie algebra carries the geometric datum of a connection and at the same time produces an associated representation for its Lie enveloping Hopf algebra as well as an Hopf isomorphism while, in practice, the results attained herein have a distinctly combinatorial flavour. Indeed, the special case of a pre-Lie algebra has already been used by Loday and Ronco in \cite{LR} to formalise the idea of a combinatorial Hopf algebra by means of a categorical equivalence. Lastly, our use of the post-Lie structure has already had some impact by allowing one to construct planar Regularity Structures in \cite{Rahm} for SPDEs in homogeneous spaces as well as to revisit, in \cite{BF23}, the Chapoton-Foissy isomorphism given in \cite{Foi02,cha10}. Also, in \cite{JZ}, the authors were inspired by our construction to study a class of post-Lie algebras comprised of derivations. 
 
We can state our main result as a meta theorem:
\begin{theorem} \label{main_theorem}
The Hopf algebra in \cite{reg,BHZ} encoding the combinatorics of recentering is obtained as the universal envelope of a suitable post-Lie algebra.
\end{theorem}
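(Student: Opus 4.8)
The plan is to exhibit an explicit post-Lie algebra $(\mathfrak{g},[\cdot,\cdot],\triangleright)$ whose Lie enveloping Hopf algebra, after passing to the graded dual, recovers the recentering coproduct of \cite{reg,BHZ}, and to carry this out uniformly for both decorated trees and multi-indices. First I would fix the underlying vector space---the linear span of decorated trees (resp.\ multi-indices)---and equip it with two operations. The post-Lie product $\triangleright$ should be the deformed grafting product of \cite{BM22}, which already reproduces the pre-Lie product of \cite{BCCH}; the genuinely new ingredient is a Lie bracket $[\cdot,\cdot]$ encoding the non-commutative relations among the formal differentiation operators acting on the decorations. This replaces the plugging operation of \cite{BM22} and is the reason one is forced out of the pre-Lie world, in which the ambient bracket vanishes, into the post-Lie setting.

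The first substantive step is to verify the two post-Lie axioms: that $\triangleright$ is a derivation of the bracket in its left-acting argument, i.e. $x\triangleright[y,z]=[x\triangleright y,z]+[y,x\triangleright z]$, and that the bracket is recovered from the associator of $\triangleright$, namely $[x,y]\triangleright z=a_\triangleright(x,y,z)-a_\triangleright(y,x,z)$ with $a_\triangleright(x,y,z)=x\triangleright(y\triangleright z)-(x\triangleright y)\triangleright z$. Once this is in place, I would invoke the universal-envelope machinery for post-Lie algebras---the post-Lie extension of the Guin--Oudom procedure \cite{Guin1,Guin2}. This equips $U(\mathfrak{g})$ with an associative product $*$ extending $\triangleright$, and by the theorem of \cite{ELM} yields a Hopf-algebra isomorphism onto $U(\bar{\mathfrak{g}})$, where $\bar{\mathfrak{g}}$ carries the modified bracket $\{x,y\}=[x,y]+x\triangleright y-y\triangleright x$.

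The final step is the identification. Dualizing $(U(\mathfrak{g}),*)$ should produce exactly the deformed Grossman--Larson / recentering coproduct, and I would check term by term that this matches the BHZ coproduct $\Delta$, including its extraction of subtrees with the correct decoration shifts. I would then run the same construction on multi-indices, showing that the Lie algebra of derivations underlying \cite{LOT} is in fact the derived Lie algebra $\bar{\mathfrak{g}}$ of a post-Lie structure, so that the isomorphism of \cite{ELM} reproduces their associative product on the universal envelope and, via the symmetry of $\triangleright$, the partially symmetric basis.

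The main obstacle, I expect, is the verification of the post-Lie compatibility between the deformed grafting product and the differentiation bracket. The deformation intertwines tree (or multi-index) grafting with shifts of the decorations, so the associator of $\triangleright$ and the bracket $[\cdot,\cdot]$ interact in a combinatorially delicate way; it is precisely this interaction---rather than the formal envelope construction, which by now is standard---that must be controlled to confirm that a genuine post-Lie, and not merely pre-Lie, structure is present.
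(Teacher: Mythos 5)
Your proposal follows essentially the same route as the paper: the paper takes the space $\mathcal{V}$ spanned by planted trees $I_a(\tau)$ and the monomials $X_i$, defines $\widehat{\triangleright}$ by deformed grafting together with $X_i \, \widehat{\triangleright} \, I_a(\tau) = \uparrow^i I_a(\tau)$, takes the bracket $[I_a(\tau),X_i]_0 = I_{a-e_i}(\tau)$ encoding exactly the non-commutation relation of Proposition~\ref{prop_non_com} (the obstacle you correctly single out), verifies the two post-Lie axioms, and applies the isomorphism theorem of \cite{ELM}, with the parallel construction carried out for multi-indices. The only minor difference is in the final identification: rather than dualizing $(U(\mathfrak{g}),*)$ term by term against the BHZ coproduct, the paper proves that $*$ coincides with the product $\star_2$ of \cite{BM22}, which was already shown there to be dual to the recentering coproduct $\Delta_2$.
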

 Theorem~\ref{main_theorem} is split in the sequel into several results: Theorem~\ref{main_result_trees} and  Theorem~\ref{identification_star_star_2} for decorated trees and Theorem~\ref{main_result_multi_indices} for multi-indices.
The central idea behind Theorem~\ref{main_theorem} is the non-commutation of derivatives observed for multi-indices \cite{LOT} and in \cite{BaiHos} for the proof of the renormalised equation. Indeed, one has a collection of derivatives $ D^{(n)} $ and $ \partial_i $ with $ n \in \mathbb{N}^{d+1} $ and $ i, j \in \lbrace 0,...,d \rbrace $ such that:
\begin{equs} \label{non_commutation_intro}
D^{(n)}  D^{(m)} & = D^{(m)} D^{(n)}, \quad \partial_i \partial_j =   \partial_j \partial_i \\
\partial_i D^{(n)} & = D^{(n)} \partial_i + n_i D^{(n-e_i)}
\end{equs}
where the $ e_i $ are the canonical basis of $ \mathbb{N}^{d+1} $. The last identity gave the authors the inspiration to introduce a suitable Lie bracket on derivations that reflects this property. Then, one has to look for a product compatible with this Lie bracket that will be a post-Lie product. In the context of decorated trees, one has an analogue of \eqref{non_commutation_intro} given by:
\begin{equs}
 \label{non_commutation_intro_2}
\uparrow^{i}_{N_\tau} \left( \sigma \widehat{\curvearrowright}^n \tau  \right) =    \sigma \widehat{\curvearrowright}^n \, ( \uparrow^{i}_{N_{\tau}} \tau)  - \sigma \widehat{\curvearrowright}^{n-e_i} \tau.
\end{equs}
Here $ \sigma, \tau $ are decorated trees with decorations on the nodes and edges given by $ \mathbb{N}^{d+1} $. The operator $ \uparrow^{i}_{N_\tau} $ sums over all the possible ways to increase a node decoration in $ \tau $ by $ e_i $. The product $ \widehat{\curvearrowright}^{n-e_i} $ is a deformed grafting operation. 
 The reader should keep in mind the following dictionary:
 
\begin{equs}
D^{(n)} \equiv \widehat{\curvearrowright}^n, \quad \uparrow^{i}_{N_\tau}  \equiv \partial_i.
\end{equs}

Hence, the statement of Theorem~\eqref{main_theorem} can be seen as independent of the underlying choice of formalism whether that be decorated trees or multi-indices. Theorem~\ref{main_theorem} together with the deformation formalism developed in \cite{BM22} yield a precise answer on how to build up the structures first proposed in \cite{BHZ} both on multi-indices and decorated trees. Among other things, we also expect this formalism to prove very fruitful in exporting algebraic properties from numerical analysis and perturbative quantum field theory to singular SPDEs.

 Finally, let us outline the paper by summarising the content of its sections. In Section~\ref{section::Post_Lie}, we recall the basics of post-Lie algebras with Definition~\ref{definition_post_lie} and the Guin-Oudom type procedure on such a product that leads to Theorem~\ref{main_theorem_section_2}. It establishes a Hopf algebra isomorphism between the Hopf algebra equipped with the product obtained from the post-Lie structure and the universal enveloping algebra of a well-chosen Lie algebra. In Section~\ref{section_trees_multi_indices}, we introduce decorated trees and multi-indices. On decorated trees, we recall multi-grafting products and their deformation coming from \cite{BM22}. We stress the crucial non-commutative property in Proposition~\ref{prop_non_com} between the deformed grafting product and the insertion-of-decorations operator. This also has an analogue at the level of multi-indices with certain derivative operators. In Section 4, we make precise Theorem~\ref{main_theorem} in the context of decorated trees by introducing the appropriate Lie algebra and post-Lie product. This allows us to apply the result from \cite{ELM} and obtain the desired isomorphism. As a consequence, we obtain a partially symmetric basis for the corresponding Lie enveloping algebra. In Section 5, we identify the appropriate post-Lie structure in the context of multi-indices and apply the same procedure, again obtaining the isomorphism result afforded to us by \cite{ELM}. This gives an alternative way to obtain a partially symmetric basis for $U(L)$, as was obtained in \cite{LOT}.

\subsection*{Acknowledgements}

{\small
	The authors are very grateful to the referees for their careful reading of the manuscript which led  to substantial improvements in the clarity of the exposition.
The authors thank Pablo Linares, Felix Otto, Markus Tempelmayr and  Pavlos Tsatsoulis for interesting discussions on the topic of multi-indices.
Y. B. thanks the Max Planck Institute for Mathematics in the Sciences (MiS) in Leipzig for having supported his research via a long stay in Leipzig from January to June 2022. Y. B. is funded by the ANR via the project LoRDeT (Dynamiques de faible régularité via les arbres décorés) from the projects call T-ERC\_STG. F. K. thanks the Max Planck Institute for Mathematics in the Sciences (MiS) for a short stay in Leipzig during which this work started.
}

\section{ Post-Lie algebras }
\label{section::Post_Lie}
In this section, we briefly recall the definition of a post-Lie algebra and its various properties.
\begin{definition} \label{definition_post_lie}
A post-Lie algebra is a Lie algebra $ (\mathfrak{g}, [.,.]) $ equipped with a bilinear product $ \triangleright $ satisfying the following identities:
\begin{equs} \label{ident1}
x \triangleright [y,z] = [x \triangleright y,z] + [y, x \triangleright z]
\end{equs}
and
\begin{equs} \label{ident2}
[x,y] \triangleright z = a_{\triangleright}(x,y,z) - a_{\triangleright}(y,x,z)
\end{equs}
with $ x,y,x \in \mathfrak{g} $ and the associator $ a_{\triangleright}(x,y,z) $ is given by:
\begin{equs}
a_{\triangleright}(x,y,z) = x \triangleright (  y  \triangleright z ) - (x \triangleright y) \triangleright z.
\end{equs}
\end{definition}
 When $ \mathfrak{g} $ is an abelian Lie algebra,
\eqref{ident1} is void and 	$ \eqref{ident2} $ is the classical definition of the pre-Lie relation. The equation  gives then a (in general) non-trivial Lie bracket. One can define a new Lie bracket $[[ .,.]]$ given by:
\begin{equation*}
[[x,y]] = [x,y] + x \triangleright y - y \triangleright x.
\end{equation*}
The post-Lie product $ \triangleright $  can be extended to a product on the universal enveloping algebra $ U(\mathfrak{g}) $ by first defining it on $ \mathfrak{g}  \otimes U(\mathfrak{g})$:
\begin{equs}
x \triangleright \one = 0, \quad x \triangleright y_1 ... y_n = \sum_{i=1}^n y_1 ... (x  \triangleright y_i) ... y_n.
\end{equs}
and then extending it to $ U(\mathfrak{g}) \otimes U(\mathfrak{g}) $ by defining:
\begin{equs}
\one  \triangleright A & = A, \quad
x A  \triangleright y = x \triangleright (A \triangleright y) -(x \triangleright A) \triangleright y, \\
A \triangleright B C & = \sum_{(A)} (A^{(1)} \triangleright B)(A^{(2)} \triangleright C).
\end{equs}
where $ A, B, C \in U(\mathfrak{g}) $ and $ x, y \in \mathfrak{g} $.  Here, we have used Sweedler's notation for the coproduct $ \Delta $:  $
\Delta A =\sum_{(A)} A^{(1)}\otimes A^{(2)} $.  This coproduct is defined for $ x \in \mathfrak{g} $ by:
\begin{equs}
\Delta x = x \otimes \one + \one \otimes x
\end{equs}
and then extended multiplicatively with respect to the concatenation product. Finally, one is able to define an associative product $*$ on $U(\mathfrak{g})$:
\begin{equs} \label{product_1}
A * B = \sum_{(A)} A^{(1)} (A^{(2)} \triangleright B).
\end{equs}

Then, one of the main results in \cite[Thm 3.4]{ELM}, which depends on the above construction, allows us to exploit the underlying post-Lie structure on $\mathfrak{g}$ in order to obtain information about the structure of $U(\mathfrak{g})$.

\begin{theorem} \label{main_theorem_section_2}
The Hopf algebra $ (U(\mathfrak{g}),*,\Delta) $ is isomorphic to the enveloping
algebra $ U(\bar{\mathfrak{g}}) $ where $ \bar{\mathfrak{g}} $ is the Lie algebra equipped with the Lie bracket $ [[.,.]] $.
\end{theorem}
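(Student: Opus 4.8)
The plan is to construct an explicit Hopf algebra isomorphism $\Phi : (U(\mathfrak{g}), *, \Delta) \to U(\bar{\mathfrak{g}})$ and verify it respects both the product and the coproduct. The natural candidate is the map that is the identity on the underlying coalgebra. Recall that by the Milnor–Moore / Cartier–Kostant–Quillen structure theory, both $U(\mathfrak{g})$ (with concatenation) and $U(\bar{\mathfrak{g}})$ are, as coalgebras, the cofree cocommutative coalgebra on the same underlying vector space $\mathfrak{g} = \bar{\mathfrak{g}}$, since $\Delta$ is the same shuffle-type coproduct making the generators primitive in both cases. Thus the first step is to observe that $\Phi = \id$ is a coalgebra isomorphism, and the entire content of the theorem is that it intertwines the two \emph{associative} products.

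\emph{First} I would pin down the two algebra structures on the same space: on the source we have the product $*$ defined in \eqref{product_1}, $A * B = \sum_{(A)} A^{(1)} (A^{(2)} \triangleright B)$, where juxtaposition is the original concatenation product of $U(\mathfrak{g})$; on the target we have the concatenation product of $U(\bar{\mathfrak{g}})$, i.e. the enveloping algebra built from the deformed bracket $[[x,y]] = [x,y] + x \triangleright y - y \triangleright x$. \emph{Second}, the key reduction is the universal property: since $U(\bar{\mathfrak{g}})$ is generated by $\bar{\mathfrak{g}}$ subject only to the relation $xy - yx = [[x,y]]$, it suffices to check that $*$ satisfies the same relation on generators, namely that for $x, y \in \mathfrak{g}$ one has
\begin{equs}
x * y - y * x = [[x,y]].
\end{equs}
\emph{Third}, this is a direct computation: expanding $x * y = x y + (x \triangleright y)$ using $\Delta x = x \otimes \one + \one \otimes x$ together with $\one \triangleright y = y$ and $x \triangleright y$ as given, and likewise for $y * x$, yields $x * y - y * x = (xy - yx) + (x \triangleright y - y \triangleright x) = [x,y] + x \triangleright y - y \triangleright x$, which is exactly $[[x,y]]$.

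\emph{The main obstacle} is not the generator-level relation but showing that the associative product $*$ is genuinely well-defined and associative on all of $U(\mathfrak{g})$, and that the identity map is then an algebra homomorphism rather than merely a map matching on generators. Concretely, one must verify that the extension of $\triangleright$ to $U(\mathfrak{g}) \otimes U(\mathfrak{g})$ is compatible with the two defining recursions (the one through $xA \triangleright y$ and the one through $A \triangleright BC$), so that $*$ is associative; this is precisely the Guin--Oudom--type content underpinning \cite[Thm 3.4]{ELM}, and the post-Lie axioms \eqref{ident1} and \eqref{ident2} are exactly what is needed to make these recursions consistent. Since we are allowed to invoke that theorem, the cleanest route is to cite it for the well-definedness and associativity of $*$ and for the statement that $(U(\mathfrak{g}), *)$ is the enveloping algebra of $(\mathfrak{g}, [[.,.]])$, and to reserve our own verification for the compatibility of $\Phi = \id$ with the coproduct $\Delta$. \emph{Finally}, I would confirm that $\Phi$ is a bialgebra map: it is automatically a coalgebra map since $\Delta$ is literally the same on both sides, and it is an algebra map by the universal property once the generator relation above is checked; bijectivity is immediate as $\Phi = \id$, and the existence of the antipode on both connected graded bialgebras upgrades this to a Hopf algebra isomorphism.
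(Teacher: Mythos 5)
The paper itself offers no proof of Theorem~\ref{main_theorem_section_2}: it is recalled verbatim from \cite[Thm 3.4]{ELM}, as the sentence immediately preceding it in Section~\ref{section::Post_Lie} makes explicit, and the paper's only contribution there is to set up the extension of $\triangleright$ to $U(\mathfrak{g})$ and the product $*$ of \eqref{product_1} that the cited theorem takes as input. So your attempt is being compared against a citation rather than an argument; judged as a sketch of the cited result, it follows the standard route, and your central computation is correct: from $\Delta x = x\otimes\one + \one\otimes x$ one gets $x*y = xy + x\triangleright y$, hence $x*y - y*x = (xy - yx) + x\triangleright y - y\triangleright x = [x,y] + x\triangleright y - y\triangleright x = [[x,y]]$, so the inclusion of $\bar{\mathfrak{g}}$ into the commutator Lie algebra of $(U(\mathfrak{g}),*)$ is a Lie morphism and the universal property of $U(\bar{\mathfrak{g}})$ produces an algebra map $\Phi$.

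Two steps, however, are glossed in a way that would not survive scrutiny. First, ``bijectivity is immediate as $\Phi=\id$'' is wrong as stated: $\Phi$ is not the identity. It sends a product $x_1\cdots x_n$ formed inside $U(\bar{\mathfrak{g}})$ to the element $x_1 * \cdots * x_n \in U(\mathfrak{g})$, which agrees with the concatenation product $x_1\cdots x_n$ only up to lower-order terms produced by $\triangleright$; bijectivity therefore needs an actual argument, e.g.\ that $\Phi$ preserves the length filtrations and induces the identity on the associated graded, which is $S(\mathfrak{g})$ on both sides by Poincar\'e--Birkhoff--Witt. Second, your plan to cite \cite[Thm 3.4]{ELM} ``for the statement that $(U(\mathfrak{g}),*)$ is the enveloping algebra of $(\mathfrak{g},[[.,.]])$'' is circular: that is precisely the theorem being proven. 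What can legitimately be outsourced to the Guin--Oudom-type machinery of \cite{Guin1,Guin2,ELM} is the well-definedness of the extension of $\triangleright$ to $U(\mathfrak{g})\otimes U(\mathfrak{g})$, the associativity of $*$, and the multiplicativity of $\Delta$ with respect to $*$, all of which rest on the post-Lie axioms \eqref{ident1}--\eqref{ident2}; with those granted, your generator identity plus the filtration argument completes the proof. As it stands, either you invoke the full cited theorem (in which case the rest of your argument is redundant, and you are doing exactly what the paper does), or you must fill these two gaps.
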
 

\begin{remark}
This result is a generalisation of the Guin-Oudom  procedure in \cite{Guin1,Guin2} applied on a pre-Lie product. The Guin-Oudom construction allows one to get an associative product and an isomorphism with the enveloping algebra of a Lie algebra whose bracket is obtained by antisymmetrisation of a pre-Lie product. As in the simpler pre-Lie case, the post-Lie assumption gives some extra structure to the underlying Lie algebra. The construction of the $*$ product can be viewed as a way to upload the extra structure to the universal enveloping algebra and exploit the additional information by means of an isomorphism theorem.
\end{remark}
Furthermore, one has the following proposition which shows that a post-Lie structure underlying a Lie algebra produces a representation-theoretic datum (see \cite[Sec. 5]{MQS}):
\begin{proposition}\label{rep_datum}
The mapping 
$$
\rho : \mathfrak{\bar{g}} \rightarrow End_{\mathbb{R}}(U(\mathfrak{g})), \quad 
x \mapsto ( y \mapsto x \triangleright y + x y)
$$
is a linear representation of the Lie algebra $\mathfrak{\bar{g}}$. Furthermore, by the universal property of the enveloping algebra it extends to a linear representation 
\begin{equs}
\rho:U(\mathfrak{\bar{g}}) \rightarrow End_{\mathbb{R}} \(U(\mathfrak{g})\)
\end{equs}
of the algebra $U(\mathfrak{\bar{g}})$. We call this the representation induced by the post-Lie algebra $(\mathfrak{g}, [ .,.], \triangleright)$ or, more shortly, the induced representation.
\end{proposition}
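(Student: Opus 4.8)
The plan is to verify that $\rho$ preserves brackets, i.e.\ that $\rho([[x,y]]) = \rho(x)\rho(y) - \rho(y)\rho(x)$ in $End_{\mathbb{R}}(U(\mathfrak{g}))$ for all $x,y \in \bar{\mathfrak{g}}$; linearity of $\rho$ is immediate from bilinearity of $\triangleright$ and of concatenation. The first thing I would record is that $\rho$ is simply left multiplication for the product $*$ of \eqref{product_1}: since $\Delta x = x \otimes \one + \one \otimes x$ for $x \in \mathfrak{g}$, formula \eqref{product_1} gives $x * A = x(\one \triangleright A) + \one(x \triangleright A) = xA + x \triangleright A = \rho(x)(A)$. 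This reframing suggests two routes, and I would run the hands-on one first, keeping the structural one as a cross-check.

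\emph{Direct route.} Using $\rho(y)(A) = y\triangleright A + yA$, I would expand $\rho(x)\rho(y)(A) = x \triangleright (y \triangleright A) + x\triangleright(yA) + x(y\triangleright A) + xyA$, and then apply the comultiplicative rule $x \triangleright (yA) = (x\triangleright y)A + y(x\triangleright A)$, which follows from $A \triangleright BC = \sum_{(A)}(A^{(1)} \triangleright B)(A^{(2)}\triangleright C)$ with $\Delta x = x\otimes\one+\one\otimes x$. Forming the commutator, the mixed terms $x(y\triangleright A)$ and $y(x\triangleright A)$ cancel pairwise, leaving
\[
\rho(x)\rho(y)(A) - \rho(y)\rho(x)(A) = \big(x\triangleright(y\triangleright A) - y\triangleright(x\triangleright A)\big) + \big(x\triangleright y - y\triangleright x\big)A + (xy-yx)A.
\]
In $U(\mathfrak{g})$ one has $xy - yx = [x,y]$, so the last two summands combine to $\big([x,y] + x\triangleright y - y\triangleright x\big)A = [[x,y]]\,A$. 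It then remains to identify the first bracket with $[[x,y]]\triangleright A$, after which the right-hand side becomes $[[x,y]]\triangleright A + [[x,y]]A = \rho([[x,y]])(A)$, as desired.

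\emph{Main obstacle.} The one step that is not purely formal is promoting the post-Lie identity \eqref{ident2} from $\mathfrak{g}$ to all of $U(\mathfrak{g})$, that is, proving $x\triangleright(y\triangleright A) - y\triangleright(x\triangleright A) = [[x,y]]\triangleright A$ for every $A \in U(\mathfrak{g})$ rather than only for $A \in \mathfrak{g}$. I would handle this with a derivation argument. The rule $x \triangleright (BC) = (x\triangleright B)C + B(x\triangleright C)$ shows that $D_x := (A \mapsto x\triangleright A)$ is a derivation of the concatenation algebra $U(\mathfrak{g})$; as a commutator of derivations is again a derivation, both $[D_x, D_y]$ and $D_{[[x,y]]}$ are derivations of $U(\mathfrak{g})$. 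On the generating subspace $\mathfrak{g}$ they coincide: evaluating \eqref{ident2} on $z\in\mathfrak{g}$ gives $x\triangleright(y\triangleright z) - y\triangleright(x\triangleright z) = ([x,y] + x\triangleright y - y\triangleright x)\triangleright z = [[x,y]]\triangleright z$. Two derivations that agree on a generating set, and on $\one$ where both vanish, agree everywhere, so the identity holds throughout $U(\mathfrak{g})$; this supplies exactly the missing ingredient.

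\emph{Structural cross-check and extension.} As a conceptual confirmation, the identity $\rho(x)(A) = x * A$ exhibits $\rho$ as the restriction to $\mathfrak{g}$ of the left regular representation of the associative algebra $(U(\mathfrak{g}),*)$, which is automatically an algebra (hence Lie) homomorphism; moreover the $*$-commutator on $\mathfrak{g}$ is $x*y - y*x = [[x,y]]$, matching the bracket of $\bar{\mathfrak{g}}$ and consistent with Theorem~\ref{main_theorem_section_2}. Finally, having shown that $\rho : \bar{\mathfrak{g}} \to End_{\mathbb{R}}(U(\mathfrak{g}))$ is a homomorphism of Lie algebras into the commutator Lie algebra of $End_{\mathbb{R}}(U(\mathfrak{g}))$, the universal property of the enveloping algebra produces a unique extension to an algebra homomorphism $\rho : U(\bar{\mathfrak{g}}) \to End_{\mathbb{R}}(U(\mathfrak{g}))$, i.e.\ a linear representation of $U(\bar{\mathfrak{g}})$, which completes the argument.
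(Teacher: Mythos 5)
Your proposal is correct, but there is nothing in the paper to compare it against: the paper states Proposition~\ref{rep_datum} with a pointer to \cite[Sec. 5]{MQS} and gives no proof, so your argument is necessarily a different (and self-contained) route. The two halves of your proof are both sound. The direct computation is fine: the mixed terms $x(y\triangleright A)$ and $y(x\triangleright A)$ do cancel, $xy-yx=[x,y]$ holds by definition of $U(\mathfrak{g})$, and you correctly isolate the only non-formal step, namely lifting the identity $x\triangleright(y\triangleright A)-y\triangleright(x\triangleright A)=[[x,y]]\triangleright A$ from $A\in\mathfrak{g}$ (where it is a rearrangement of \eqref{ident2}) to all of $U(\mathfrak{g})$; your derivation argument closes this gap cleanly, since each $D_x=x\triangleright(\cdot)$ is a derivation of the concatenation algebra by the defining extension of $\triangleright$, commutators of derivations are derivations, and two derivations agreeing on the generating set $\mathfrak{g}$ (and on $\one$) agree everywhere. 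Your structural cross-check is in fact the shortest complete proof, and is presumably close to what \cite{MQS} has in mind: once one grants that $*$ in \eqref{product_1} is associative (part of the construction behind Theorem~\ref{main_theorem_section_2}, from \cite{ELM}), the identity $\rho(x)=x*(\cdot)$ exhibits $\rho$ as the restriction of the left regular representation of $(U(\mathfrak{g}),*)$, and $x*y-y*x=[[x,y]]$ on $\mathfrak{g}$ finishes it. The trade-off is exactly as you frame it: the structural route leans on the nontrivial associativity of $*$, while your direct route uses only the post-Lie axioms and the definition of the extension of $\triangleright$ to $U(\mathfrak{g})$, so it is the more elementary argument. The extension to $U(\bar{\mathfrak{g}})$ via the universal property is stated correctly in both.
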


We finish this section by presenting standard examples of pre- and post-Lie algebras. We first consider the set of non-planar rooted trees denoted by $ \CT $.  We define a product $ \bar{\curvearrowright}  $ called grafting product defined for two rooted trees $ \tau, \sigma $ by:
\begin{equation*} 
\sigma \bar{\curvearrowright} \tau:=\sum_{v\in  N_{\tau} } \sigma \bar{\curvearrowright}_v  \tau,
\end{equation*}
where $ N_\tau $ is the set of vertices of $ \tau $ and $\sigma \bar{\curvearrowright}_v \tau$ is obtained by grafting the tree $\sigma$ on the tree $\tau$ at vertex $v$ by means of a new edge. Below, we provide an example of computation for $ \bar{\curvearrowright} $
\begin{equs} 
	\bullet \, \bar{\curvearrowright}  \begin{tikzpicture}[scale=0.2,baseline=0.1cm]
		\node at (0,0)  [dot,label= {[label distance=-0.2em]below: \scriptsize  $     $} ] (root) {};
		\node at (1,2)  [dot,label={[label distance=-0.2em]above: \scriptsize  $  $}] (right) {};
		\node at (-1,2)  [dot,label={[label distance=-0.2em]above: \scriptsize  $ $} ] (left) {};
		\draw[kernel1] (right) to
		node [sloped,below] {\small }     (root); \draw[kernel1] (left) to
		node [sloped,below] {\small }     (root);
{i}	\end{tikzpicture} = \begin{tikzpicture}[scale=0.2,baseline=0.1cm]
		\node at (0,0)  [dot,label= {[label distance=-0.2em]below: \scriptsize  $     $} ] (root) {};
		\node at (0,3)  [dot,label= {[label distance=-0.2em]above: \scriptsize  $     $} ] (center) {};
		\node at (2,2)  [dot,label={[label distance=-0.2em]above: \scriptsize  $ $}] (right) {};
		\node at (-2,2)  [dot,label={[label distance=-0.2em]above: \scriptsize  $ $} ] (left) {};
		\draw[kernel1] (right) to
		node [sloped,below] {\small }     (root); 
		\draw[kernel1] (center) to
		node [sloped,below] {\small }     (root); 
		\draw[kernel1] (left) to
		node [sloped,below] {\small }     (root);
	\end{tikzpicture} + 2 \begin{tikzpicture}[scale=0.2,baseline=0.1cm]
		\node at (0,0)  [dot,label= {[label distance=-0.2em]below: \scriptsize  $   $} ] (root) {};
		\node at (0,4)  [dot,label= {[label distance=-0.2em]above: \scriptsize  $     $} ] (center) {};
		\node at (1,2)  [dot,label={[label distance=-0.2em]right: \scriptsize  $  $}] (right) {};
		\node at (-1,2)  [dot,label={[label distance=-0.2em]above: \scriptsize  $ $} ] (left) {};
		\draw[kernel1] (right) to
		node [sloped,below] {\small }     (root);
		\draw[kernel1] (center) to
		node [sloped,below] {\small }     (right); \draw[kernel1] (left) to
		node [sloped,below] {\small }     (root);
	\end{tikzpicture}. 
\end{equs}
The space $ \CT $ equipped with $ \bar{\curvearrowright}  $ is the free pre-Lie algebra over one generator $ \bullet $ (see \cite{ChaLiv}). We consider the linear span of planar rooted trees denoted by $ \mathcal{P}\mathcal{T} $. Now, one has 
\begin{equs}
	\begin{tikzpicture}[scale=0.2,baseline=0.1cm]
		\node at (0,0)  [dot,label= {[label distance=-0.2em]below: \scriptsize  $   $} ] (root) {};
		\node at (-1,4)  [dot,label= {[label distance=-0.2em]above: \scriptsize  $     $} ] (center) {};
		\node at (1,2)  [dot,label={[label distance=-0.2em]right: \scriptsize  $  $}] (right) {};
		\node at (-1,2)  [dot,label={[label distance=-0.2em]above: \scriptsize  $ $} ] (left) {};
		\draw[kernel1] (right) to
		node [sloped,below] {\small }     (root);
		\draw[kernel1] (center) to
		node [sloped,below] {\small }     (left); \draw[kernel1] (left) to
		node [sloped,below] {\small }     (root);
	\end{tikzpicture}  \neq \begin{tikzpicture}[scale=0.2,baseline=0.1cm]
	\node at (0,0)  [dot,label= {[label distance=-0.2em]below: \scriptsize  $   $} ] (root) {};
	\node at (1,4)  [dot,label= {[label distance=-0.2em]above: \scriptsize  $     $} ] (center) {};
	\node at (1,2)  [dot,label={[label distance=-0.2em]right: \scriptsize  $  $}] (right) {};
	\node at (-1,2)  [dot,label={[label distance=-0.2em]above: \scriptsize  $ $} ] (left) {};
	\draw[kernel1] (right) to
	node [sloped,below] {\small }     (root);
	\draw[kernel1] (center) to
	node [sloped,below] {\small }     (right); \draw[kernel1] (left) to
	node [sloped,below] {\small }     (root);
\end{tikzpicture} 
\end{equs}
which is not the case in the non-planar setting.
 One can define a magmatic operation $ \bar{\curvearrowright}_{l} $  called left grafting. It is given for $ \tau, \sigma $ non-planar rooted trees by:
\begin{equation*} 
	\sigma \bar{\curvearrowright}_{l} \tau:=\sum_{v\in  N_{\tau} } \sigma \bar{\curvearrowright}_{l,v}  \tau,
\end{equation*}
 where $\sigma \bar{\curvearrowright}_{l,v} \tau$ is obtained by grafting the tree $\sigma$ on the tree $\tau$ at vertex $v$ by means of a new edge at the left most location. Below, we provide an example of computation for $ \bar{\curvearrowright}_{l} $
 \begin{equs} 
 	\bullet \, \bar{\curvearrowright}_l  \begin{tikzpicture}[scale=0.2,baseline=0.1cm]
 		\node at (0,0)  [dot,label= {[label distance=-0.2em]below: \scriptsize  $   $} ] (root) {};
 		\node at (-1,4)  [dot,label= {[label distance=-0.2em]above: \scriptsize  $     $} ] (center) {};
 		\node at (1,2)  [dot,label={[label distance=-0.2em]right: \scriptsize  $  $}] (right) {};
 		\node at (-1,2)  [dot,label={[label distance=-0.2em]above: \scriptsize  $ $} ] (left) {};
 		\draw[kernel1] (right) to
 		node [sloped,below] {\small }     (root);
 		\draw[kernel1] (center) to
 		node [sloped,below] {\small }     (left); \draw[kernel1] (left) to
 		node [sloped,below] {\small }     (root);
 	\end{tikzpicture}  = \begin{tikzpicture}[scale=0.2,baseline=0.1cm]
 		\node at (0,0)  [dot,label= {[label distance=-0.2em]below: \scriptsize  $     $} ] (root) {};
 		\node at (0,2)  [dot,label= {[label distance=-0.2em]above: \scriptsize  $     $} ] (center) {};
 		\node at (0,4)  [dot,label= {[label distance=-0.2em]above: \scriptsize  $     $} ] (centerc) {};
 		\node at (1.5,1.5)  [dot,label={[label distance=-0.2em]above: \scriptsize  $ $}] (right) {};
 		\node at (-1.5,1.5)  [dot,label={[label distance=-0.2em]above: \scriptsize  $ $} ] (left) {};
 		\draw[kernel1] (right) to
 		node [sloped,below] {\small }     (root); 
 		\draw[kernel1] (center) to
 		node [sloped,below] {\small }     (root); 
 		\draw[kernel1] (center) to
 		node [sloped,below] {\small }     (centerc); 
 		\draw[kernel1] (left) to
 		node [sloped,below] {\small }     (root);
 	\end{tikzpicture} +  \begin{tikzpicture}[scale=0.2,baseline=0.1cm]
 	\node at (0,0)  [dot,label= {[label distance=-0.2em]below: \scriptsize  $   $} ] (root) {};
 	\node at (0,4)  [dot,label= {[label distance=-0.2em]above: \scriptsize  $     $} ] (center) {};
 	\node at (-2,4)  [dot,label= {[label distance=-0.2em]above: \scriptsize  $     $} ] (centerl) {};
 	\node at (1,2)  [dot,label={[label distance=-0.2em]right: \scriptsize  $  $}] (right) {};
 	\node at (-1,2)  [dot,label={[label distance=-0.2em]above: \scriptsize  $ $} ] (left) {};
 	\draw[kernel1] (right) to
 	node [sloped,below] {\small }     (root);
 	\draw[kernel1] (centerl) to
 	node [sloped,below] {\small }     (left);
 	\draw[kernel1] (center) to
 	node [sloped,below] {\small }     (left); \draw[kernel1] (left) to
 	node [sloped,below] {\small }     (root);
 \end{tikzpicture} +  \begin{tikzpicture}[scale=0.2,baseline=0.1cm]
 	\node at (0,0)  [dot,label= {[label distance=-0.2em]below: \scriptsize  $   $} ] (root) {};
 	\node at (-1,4)  [dot,label= {[label distance=-0.2em]above: \scriptsize  $     $} ] (center) {};
 	\node at (-1,6)  [dot,label= {[label distance=-0.2em]above: \scriptsize  $     $} ] (centerc) {};
 	\node at (1,2)  [dot,label={[label distance=-0.2em]right: \scriptsize  $  $}] (right) {};
 	\node at (-1,2)  [dot,label={[label distance=-0.2em]above: \scriptsize  $ $} ] (left) {};
 	\draw[kernel1] (right) to
 	node [sloped,below] {\small }     (root);
 	\draw[kernel1] (center) to
 	node [sloped,below] {\small }     (centerc);
 	\draw[kernel1] (center) to
 	node [sloped,below] {\small }     (left); \draw[kernel1] (left) to
 	node [sloped,below] {\small }     (root);
 \end{tikzpicture}  + \begin{tikzpicture}[scale=0.2,baseline=0.1cm]
 \node at (0,0)  [dot,label= {[label distance=-0.2em]below: \scriptsize  $   $} ] (root) {};
 \node at (-1,4)  [dot,label= {[label distance=-0.2em]above: \scriptsize  $     $} ] (center) {};
 \node at (1,4)  [dot,label= {[label distance=-0.2em]above: \scriptsize  $     $} ] (center1) {};
 \node at (1,2)  [dot,label={[label distance=-0.2em]right: \scriptsize  $  $}] (right) {};
 \node at (-1,2)  [dot,label={[label distance=-0.2em]above: \scriptsize  $ $} ] (left) {};
 \draw[kernel1] (right) to
 node [sloped,below] {\small }     (root);
 \draw[kernel1] (center) to
 node [sloped,below] {\small }     (left); \draw[kernel1] (left) to
 node [sloped,below] {\small }     (root);
  \draw[kernel1] (center1) to
 node [sloped,below] {\small }     (right); 
\end{tikzpicture}.
 \end{equs}
If we consider the free Lie algebra generated by $ \mathcal{P} \mathcal{T} $ equipped with the left-most grafting, one obtains the free post-Lie algebra (see \cite[Thm. 3.2]{ML13}). 
\section{ Decorated trees, multi-indices and non-commuting operators}
\label{section_trees_multi_indices}

 \ \ In this section, we recall two different combinatorial structures that are both used in the context of Regularity Structures with the aim of solving singular SPDEs. The first one is that of decorated trees and the second one is that of multi-indices. We shall stress the non-commutative nature of some key operators defined on these structures and proceed to show that their non-commutative properties can be elegantly captured by certain post-Lie algebraic structures.

Decorated trees as introduced in
\cite{BHZ} are described in the following way. Pick two symbols $I$ and $\Xi$ and let $ \mathcal{D} := \lbrace I,\Xi \rbrace \times \mathbb{N}^{d+1}$ define the set of edge decorations. These two symbols represent a convolution with a kernel $ I $ and a noise term $ \Xi $. One may add more symbols if one works with a system of SPDEs with more than one noise and one kernel. Decorated trees over $ \mathcal{D} $ are of the form  $T_{\Labe}^{\Labn} =  (T,\Labn,\Labe) $ where $T$ is a non-planar rooted tree with node set $N_T$ and edge set $E_T$. The maps $\Labn : N_T \rightarrow \mathbb{N}^{d+1}$ and $\Labe : E_T \rightarrow \mathcal{D}$ are node, respectively edge, decorations. We denote the set of decorated trees by $ \mfT $. The tree product is defined by 
\begin{equation*} 
 	(T,\Labn,\Labe) \cdot  (T',\Labn',\Labe') 
 	= (T \cdot T',\Labn + \Labn', \Labe + \Labe')\;, 
\end{equation*} 
where $T \cdot T'$ is the rooted tree obtained by identifying the roots of $ T$ and $T'$. The sums $ \Labn + \Labn'$ mean that decorations are added at the root and extended to the disjoint union by setting them to vanish on the other tree.  
 We make the connection with  symbolic notation introduced in the previous part.

\begin{enumerate}
   \item[--] An edge decorated by  $ (I,a) \in \mathcal{D} $  is denoted by $ I_{a} $. The symbol $  I_{a} $ is also viewed as  the operation that grafts a tree onto a new root via a new edge with edge decoration $ a $. The new root at hand remains decorated with $0$. 
   
   \item[--]  An edge decorated by $ (\Xi,0) \in \mathcal{D} $ is denoted by $  \Xi $.  We suppose that these edges are terminal edges with zero node decoration at their leaves.

   \item[--] A factor $ X^{\ell}$   encodes a single node  $ \bullet^{\ell} $ decorated by $ \ell \in \mathbb{N}^{d+1}$. We write $ X_i$, $ i \in \lbrace 0,1,\ldots,d\rbrace $, to denote $ X^{e_i}$. Here, we have denoted by $ e_i $ the vector of $ \mathbb{N}^{d+1} $ with $ 1 $ in $i$th position and $ 0 $ otherwise. The element $ X^0 $ is identified with the empty tree $\mathbbm{1}$.
 \end{enumerate}
 Using this symbolic notation, given a decorated tree $ \tau $ there exist decorated trees $ \tau_1, ..., \tau_r $ such that
 \begin{equs}
 \tau = X^{\ell} \Xi^m  \prod_{i=1}^r I_{a_i}(\tau_i)
 \end{equs}
 where $ \prod_i $ is the tree product, $ \ell \in \mathbb{N}^{d+1} $, $ m, r \in \mathbb{N} $. In relevant applications, a product of noises is not allowed and one can only consider the case $ m \in \lbrace 0,1 \rbrace $. A tree of the form $ I_a(\tau) $ is called a planted tree as there is only one edge connecting the root to the rest of the tree.
 Below, we present an example of  decorated trees:
 \begin{equs} 
 	\tau = X^{\alpha} \Xi I_a(X^{\beta})  =   \begin{tikzpicture}[scale=0.2,baseline=0.1cm]
 		\node at (0,0)  [dot,label= {[label distance=-0.2em]below: \scriptsize  $  \alpha   $} ] (root) {};
 		\node at (2,4)  [dot,label={[label distance=-0.2em]above: \scriptsize  $ \beta $}] (right) {};
 		\node at (-2,4)  [dot,label={[label distance=-0.2em]above: \scriptsize  $ $} ] (left) {};
 		\draw[kernel1] (right) to
 		node [sloped,below] {\small }     (root); \draw[kernel1] (left) to
 		node [sloped,below] {\small }     (root);
 		\node at (-1,2) [fill=white,label={[label distance=0em]center: \scriptsize  $ \Xi $} ] () {};
 		\node at (1,2) [fill=white,label={[label distance=0em]center: \scriptsize  $ a $} ] () {};
 	\end{tikzpicture}, \quad 
     I_b(\tau) = 
 \begin{tikzpicture}[scale=0.2,baseline=0.1cm]
 	\node at (0,0)  [dot,label= {[label distance=-0.2em]below: \scriptsize  $  $} ] (root) {};
 	\node at (-3,3)  [dot,label={[label distance=-0.2em]left: \scriptsize  $ \alpha $} ] (left) {};
 	\node at (0,7)  [dot,label={[label distance=-0.2em]above: \scriptsize  $ \beta $} ] (center) {};
 	\node at (-6,7)  [dot,label={[label distance=-0.2em]above: \scriptsize  $  $} ] (centerr) {};
 	\draw[kernel1] (left) to
 	node [sloped,below] {\small }     (root);
 	\draw[kernel1] (center) to
 	node [sloped,below] {\small }     (left);
 	\draw[kernel1] (centerr) to
 	node [sloped,below] {\small }     (left);
 	\node at (-1.5,1.5) [fill=white,label={[label distance=0em]center: \scriptsize  $ b $} ] () {};
 	\node at (-1.5,5) [fill=white,label={[label distance=0em]center: \scriptsize  $ a $} ] () {};
 	\node at (-4.5,5) [fill=white,label={[label distance=0em]center: \scriptsize  $ \Xi $} ] () {};
 \end{tikzpicture}
 \end{equs}
where we have put $ \Xi $ on the edge to specify that it is decorated by $ (\Xi,0) $. For an edge decorated by $ (I,a) $, we have just put $ a $. Nodes without decoration mean that their decoration is equal to zero.  We define a product called grafting product:
\begin{equation*} 
\sigma \curvearrowright^a \tau:=\sum_{v\in  N_{\tau} } \sigma \curvearrowright^a_v  \tau,
\end{equation*}
where $\sigma $ and $\tau$ are two decorated rooted trees, $ N_\tau $ is the set of vertices of $ \tau $ and where $\sigma \curvearrowright^a_v \tau$ is obtained by grafting the tree $\sigma$ on the tree $\tau$ at vertex $v$ by means of a new edge decorated by $a\in \mathbb{N}^{d+1}$. Grafting onto noise-type edges, that is, edges decorated by $ (\Xi,0) $ is forbidden. Therefore, there is a bijection between noises as decorated edges and noises as node decorations. Below, we provide an example of this grafting product: 
\begin{equs} 
 \bullet^{\alpha}  \curvearrowright^a   \begin{tikzpicture}[scale=0.2,baseline=0.1cm]
		\node at (0,0)  [dot,label= {[label distance=-0.2em]below: \scriptsize  $  \gamma   $} ] (root) {};
		\node at (2,4)  [dot,label={[label distance=-0.2em]above: \scriptsize  $ \beta $}] (right) {};
		\node at (-2,4)  [dot,label={[label distance=-0.2em]above: \scriptsize  $ $} ] (left) {};
		\draw[kernel1] (right) to
		node [sloped,below] {\small }     (root); \draw[kernel1] (left) to
		node [sloped,below] {\small }     (root);
		\node at (-1,2) [fill=white,label={[label distance=0em]center: \scriptsize  $ \Xi $} ] () {};
		\node at (1,2) [fill=white,label={[label distance=0em]center: \scriptsize  $ b $} ] () {};
	\end{tikzpicture} = \begin{tikzpicture}[scale=0.2,baseline=0.1cm]
	\node at (0,0)  [dot,label= {[label distance=-0.2em]below: \scriptsize  $  \gamma   $} ] (root) {};
	\node at (0,5)  [dot,label= {[label distance=-0.2em]above: \scriptsize  $  \alpha   $} ] (center) {};
	\node at (3,4)  [dot,label={[label distance=-0.2em]above: \scriptsize  $ \beta $}] (right) {};
	\node at (-3,4)  [dot,label={[label distance=-0.2em]above: \scriptsize  $ $} ] (left) {};
	\draw[kernel1] (right) to
	node [sloped,below] {\small }     (root); 
	\draw[kernel1] (center) to
	node [sloped,below] {\small }     (root); 
	\draw[kernel1] (left) to
	node [sloped,below] {\small }     (root);
	\node at (-2,2) [fill=white,label={[label distance=0em]center: \scriptsize  $ \Xi $} ] () {};
	\node at (2,2) [fill=white,label={[label distance=0em]center: \scriptsize  $ b $} ] () {};
	\node at (0,2.5) [fill=white,label={[label distance=0em]center: \scriptsize  $ a $} ] () {};
\end{tikzpicture} + \begin{tikzpicture}[scale=0.2,baseline=0.1cm]
\node at (0,0)  [dot,label= {[label distance=-0.2em]below: \scriptsize  $  \gamma  $} ] (root) {};
\node at (0,8)  [dot,label= {[label distance=-0.2em]above: \scriptsize  $  \alpha   $} ] (center) {};
\node at (2,4)  [dot,label={[label distance=-0.2em]right: \scriptsize  $ \beta $}] (right) {};
\node at (-2,4)  [dot,label={[label distance=-0.2em]above: \scriptsize  $ $} ] (left) {};
\draw[kernel1] (right) to
node [sloped,below] {\small }     (root);
\draw[kernel1] (center) to
node [sloped,below] {\small }     (right); \draw[kernel1] (left) to
node [sloped,below] {\small }     (root);
\node at (-1,2) [fill=white,label={[label distance=0em]center: \scriptsize  $ \Xi $} ] () {};
\node at (1,2) [fill=white,label={[label distance=0em]center: \scriptsize  $ b $} ] () {};
\node at (1,6) [fill=white,label={[label distance=0em]center: \scriptsize  $ a $} ] () {};
\end{tikzpicture}. 
\end{equs}
One should notice that we did not graft onto the noise edge. The family of grafting products $ (\curvearrowright^b  )_{b \in \mathbb{N}^{d+1}} $ forms a multi-pre-Lie algebra, in the sense that they satisfy the following identities:
\begin{equs}
	\left( \tau_1  \curvearrowright^a \tau_2 \right)  \curvearrowright^b \tau_3 - 	 \tau_1  \curvearrowright^a (  \tau_2  \curvearrowright^b \tau_3 ) = 	( \tau_2  \curvearrowright^b \tau_1 )  \curvearrowright^a \tau_3 - 	 \tau_2  \curvearrowright^b (  \tau_1  \curvearrowright^a \tau_3 )
\end{equs}
where the $ \tau_i $ are decorated trees and $ a,b $ belong to $ \mathbb{N}^{d+1} $.
This is an extension of the notion of a pre-Lie product (which is recovered when the family is reduced to one element) and was first introduced in \cite[Prop. 4.21]{BCCH}.
This multi-pre-Lie algebra can be summarised into a single pre-Lie structure on the space of planted trees, given by the product:
\begin{equation*} 
I_{a}(\sigma) \curvearrowright I_{b}(\tau):= I_{b}(\sigma \curvearrowright^a \tau).
\end{equation*}
This was first noticed in \cite{F2018} (see also \cite[Prop. 3.2]{BM22}). Below, we provide an example that illustrates the grafting operation:

\begin{equs} 
\begin{tikzpicture}[scale=0.2,baseline=0.1cm]
	\node at (0,0)  [dot,label= {[label distance=-0.2em]below: \scriptsize  $    $} ] (root) {};
	\node at (0,4)  [dot,label={[label distance=-0.2em]above: \scriptsize  $ \alpha $}] (center) {};
	\draw[kernel1] (center) to
	node [sloped,below] {\small }     (root); 
	\node at (0,2) [fill=white,label={[label distance=0em]center: \scriptsize  $ a $} ] () {};
\end{tikzpicture} 	 \curvearrowright  \begin{tikzpicture}[scale=0.2,baseline=0.1cm]
\node at (0,0)  [dot,label= {[label distance=-0.2em]below: \scriptsize  $     $} ] (root) {};
\node at (0,8)  [dot,label= {[label distance=-0.2em]above: \scriptsize  $    $} ] (center) {};
\node at (2,4)  [dot,label={[label distance=-0.2em]right: \scriptsize  $ \beta $}] (right) {};
\draw[kernel1] (right) to
node [sloped,below] {\small }     (root);
\draw[kernel1] (center) to
node [sloped,below] {\small }     (right); 
\node at (1,2) [fill=white,label={[label distance=0em]center: \scriptsize  $ b $} ] () {};
\node at (1,6) [fill=white,label={[label distance=0em]center: \scriptsize  $ \Xi $} ] () {};
\end{tikzpicture}   =   \begin{tikzpicture}[scale=0.2,baseline=0.1cm]
\node at (0,0)  [dot,label= {[label distance=-0.2em]below: \scriptsize  $     $} ] (root) {};
\node at (0,8)  [dot,label= {[label distance=-0.2em]above: \scriptsize  $    $} ] (center) {};
\node at (4,8)  [dot,label= {[label distance=-0.2em]above: \scriptsize  $ \alpha   $} ] (centerl) {};
\node at (2,4)  [dot,label={[label distance=-0.2em]right: \scriptsize  $ \beta $}] (right) {};
\draw[kernel1] (right) to
node [sloped,below] {\small }     (root);
\draw[kernel1] (center) to
node [sloped,below] {\small }     (right); 
\draw[kernel1] (centerl) to
node [sloped,below] {\small }     (right); 
\node at (1,2) [fill=white,label={[label distance=0em]center: \scriptsize  $ b $} ] () {};
\node at (3,6) [fill=white,label={[label distance=0em]center: \scriptsize  $ a $} ] () {};
\node at (1,6) [fill=white,label={[label distance=0em]center: \scriptsize  $ \Xi $} ] () {};
\end{tikzpicture}
\end{equs}
Notice that we do not graft at the root, which has its decoration set to zero.

Furthermore, the products $ \curvearrowright^a $ can be deformed via a pre-Lie isomorphism described in \cite[Sec. 2.2]{BM22}. The deformed products are given by:

\begin{equation*} 
\sigma \widehat{\curvearrowright}^a \tau:=\sum_{v\in N_{\tau}}\sum_{\ell\in\mathbb{N}^{d+1}}{\Labn_v \choose \ell} \sigma  \curvearrowright_v^{a-\ell}(\uparrow_v^{-\ell} \tau).
\end{equation*}
\label{deformed_grafting_a}

 Intuitively, one should think of the operators $\uparrow_v^{-\ell}$ as encoding the operation of partial differentiation at a purely algebraic level. Formally, $ \Labn_v \in \mathbb{N}^{d+1}$ denotes  the decoration at the vertex $ v $ and the operator $ \uparrow_v^{-\ell} $ is defined  as subtracting $ \ell $ from the node decoration of $ v $.

 The generic term is self-explanatory if there exists a (unique) pair $(b,\alpha)\in \mathbb{N}^{d+1} \times \mathbb{N}^{d+1}$ such that $a=\ell+b$ and $\Labn_v =\ell+\alpha$. It vanishes by convention if this condition is not satisfied. Given a scaling $\s \in\mathbb{N}_0^{d+1} = \mathbb{N}^{d+1} \setminus \lbrace 0 \rbrace$ we define the \textsl{grading} of a tree as the sum of the gradings of its edges and denote it by $ |\cdot|_{\text{grad}} $:
\begin{equation*} 
|\tau|_{\text{grad}}:=\sum_{e\in E_{\tau}}\big|\Labe(e) \big|_{\s}
\end{equation*}
where $ E_{\tau} $ are the edges of $ \tau $, $ \Labe(e) $ is the decoration of the edge $ e $ and for a given $ \mathbf{n} \in \mathbb{N}^{d+1} $, one has:
 \begin{equs}
|\mathbf n|_{\s}:= \sum_{i=0}^d s_i n_i.
\end{equs}
 A  scaling $\s$ is necessary as these decorated trees are used in the context of singular SPDEs where different variables come with different degrees of homogeneity. A good example to keep in mind is that of parabolic
equations, where the linear operator is given by $ \partial_t - \Delta $, with the Laplacian acting on the spatial variable. It is then natural to make powers of t “count double" and take the parabolic scaling $ (2,1,...,1) $.

 The deformed product $\widehat{\curvearrowright}^a$ has been first introduced in \cite[Rem. 4.12]{BCCH}.
One notices that $\widehat{\curvearrowright}^a$ is a deformation of $\curvearrowright^a$ in the sense that:
\begin{equs}
\sigma \widehat{\curvearrowright}^a \tau = \sigma \curvearrowright^a \tau +
\hbox{ lower grading terms}.
\end{equs}
Again, one may summarise the above family of multi-pre-Lie products into a single pre-Lie product. More specifically, the deformed pre-Lie product $\widehat{\curvearrowright}$ is given by:
\begin{equs}
I_{a}(\sigma) \, \widehat{\curvearrowright} \, I_{b}(\tau):= I_{b}(\sigma \, \widehat{\curvearrowright}^{a} \, \tau).
\end{equs}
Below, we provide an example illustrating the product $ \widehat{\curvearrowright} $:
\begin{equs} 
	\begin{tikzpicture}[scale=0.2,baseline=0.1cm]
		\node at (0,0)  [dot,label= {[label distance=-0.2em]below: \scriptsize  $    $} ] (root) {};
		\node at (0,4)  [dot,label={[label distance=-0.2em]above: \scriptsize  $ \alpha $}] (center) {};
		\draw[kernel1] (center) to
		node [sloped,below] {\small }     (root); 
		\node at (0,2) [fill=white,label={[label distance=0em]center: \scriptsize  $ a $} ] () {};
	\end{tikzpicture} 	 \widehat{\curvearrowright} \begin{tikzpicture}[scale=0.2,baseline=0.1cm]
		\node at (0,0)  [dot,label= {[label distance=-0.2em]below: \scriptsize  $     $} ] (root) {};
		\node at (0,8)  [dot,label= {[label distance=-0.2em]above: \scriptsize  $    $} ] (center) {};
		\node at (2,4)  [dot,label={[label distance=-0.2em]right: \scriptsize  $ \beta $}] (right) {};
		\draw[kernel1] (right) to
		node [sloped,below] {\small }     (root);
		\draw[kernel1] (center) to
		node [sloped,below] {\small }     (right); 
		\node at (1,2) [fill=white,label={[label distance=0em]center: \scriptsize  $ b $} ] () {};
		\node at (1,6) [fill=white,label={[label distance=0em]center: \scriptsize  $ \Xi $} ] () {};
	\end{tikzpicture}   =   \begin{tikzpicture}[scale=0.2,baseline=0.1cm]
		\node at (0,0)  [dot,label= {[label distance=-0.2em]below: \scriptsize  $     $} ] (root) {};
		\node at (0,8)  [dot,label= {[label distance=-0.2em]above: \scriptsize  $    $} ] (center) {};
		\node at (4,8)  [dot,label= {[label distance=-0.2em]above: \scriptsize  $ \alpha   $} ] (centerl) {};
		\node at (2,4)  [dot,label={[label distance=-0.2em]right: \scriptsize  $ \beta $}] (right) {};
		\draw[kernel1] (right) to
		node [sloped,below] {\small }     (root);
		\draw[kernel1] (center) to
		node [sloped,below] {\small }     (right); 
		\draw[kernel1] (centerl) to
		node [sloped,below] {\small }     (right); 
		\node at (1,2) [fill=white,label={[label distance=0em]center: \scriptsize  $ b $} ] () {};
		\node at (3,6) [fill=white,label={[label distance=0em]center: \scriptsize  $ a $} ] () {};
		\node at (1,6) [fill=white,label={[label distance=0em]center: \scriptsize  $ \Xi $} ] () {};
	\end{tikzpicture} + \sum_{\ell \in \mathbb{N}^{d+1}_0 }{\beta \choose \ell} \, \, \begin{tikzpicture}[scale=0.2,baseline=0.1cm]
	\node at (0,0)  [dot,label= {[label distance=-0.2em]below: \scriptsize  $     $} ] (root) {};
	\node at (0,8)  [dot,label= {[label distance=-0.2em]above: \scriptsize  $    $} ] (center) {};
	\node at (4,8)  [dot,label= {[label distance=-0.2em]above: \scriptsize  $ \alpha   $} ] (centerl) {};
	\node at (2,4)  [dot,label={[label distance=-0.2em]right: \scriptsize  $ \beta - \ell $}] (right) {};
	\draw[kernel1] (right) to
	node [sloped,below] {\small }     (root);
	\draw[kernel1] (center) to
	node [sloped,below] {\small }     (right); 
	\draw[kernel1] (centerl) to
	node [sloped,below] {\small }     (right); 
	\node at (1,2) [fill=white,label={[label distance=0em]center: \scriptsize  $ b $} ] () {};
	\node at (3,6) [fill=white,label={[label distance=0em]center: \scriptsize  $ \qquad a - \ell  $} ] () {};
	\node at (1,6) [fill=white,label={[label distance=0em]center: \scriptsize  $ \Xi $} ] () {};
\end{tikzpicture}
\end{equs}
Another important operation we will need to define on the space of trees is $ \uparrow^{i} $:
\begin{equs}
\uparrow^{i} \tau  = \sum_{v \in N_{\tau}} \uparrow^{e_i}_v \tau.  
\end{equs}
Here, the insertion does not happen on top of noise-type edges. We provide below an example of computation:
\begin{equs} 
	\uparrow^{i} \begin{tikzpicture}[scale=0.2,baseline=0.1cm]
		\node at (0,0)  [dot,label= {[label distance=-0.2em]below: \scriptsize  $  \gamma  $} ] (root) {};
		\node at (2,4)  [dot,label={[label distance=-0.2em]above: \scriptsize  $ \beta $}] (right) {};
		\node at (-2,4)  [dot,label={[label distance=-0.2em]above: \scriptsize  $ $} ] (left) {};
		\draw[kernel1] (right) to
		node [sloped,below] {\small }     (root); \draw[kernel1] (left) to
		node [sloped,below] {\small }     (root);
		\node at (-1,2) [fill=white,label={[label distance=0em]center: \scriptsize  $ \Xi $} ] () {};
		\node at (1,2) [fill=white,label={[label distance=0em]center: \scriptsize  $ b $} ] () {};
	\end{tikzpicture} =  \begin{tikzpicture}[scale=0.2,baseline=0.1cm]
	\node at (0,0)  [dot,label= {[label distance=-0.2em]below: \scriptsize  $  \gamma + e_i  $} ] (root) {};
	\node at (2,4)  [dot,label={[label distance=-0.2em]above: \scriptsize  $ \beta $}] (right) {};
	\node at (-2,4)  [dot,label={[label distance=-0.2em]above: \scriptsize  $ $} ] (left) {};
	\draw[kernel1] (right) to
	node [sloped,below] {\small }     (root); \draw[kernel1] (left) to
	node [sloped,below] {\small }     (root);
	\node at (-1,2) [fill=white,label={[label distance=0em]center: \scriptsize  $ \Xi $} ] () {};
	\node at (1,2) [fill=white,label={[label distance=0em]center: \scriptsize  $ b $} ] () {};
\end{tikzpicture} +  \begin{tikzpicture}[scale=0.2,baseline=0.1cm]
\node at (0,0)  [dot,label= {[label distance=-0.2em]below: \scriptsize  $  \gamma  $} ] (root) {};
\node at (2,4)  [dot,label={[label distance=-0.2em]above: \scriptsize  $ \beta + e_i $}] (right) {};
\node at (-2,4)  [dot,label={[label distance=-0.2em]above: \scriptsize  $ $} ] (left) {};
\draw[kernel1] (right) to
node [sloped,below] {\small }     (root); \draw[kernel1] (left) to
node [sloped,below] {\small }     (root);
\node at (-1,2) [fill=white,label={[label distance=0em]center: \scriptsize  $ \Xi $} ] () {};
\node at (1,2) [fill=white,label={[label distance=0em]center: \scriptsize  $ b $} ] () {};
\end{tikzpicture}.
\end{equs}
This operator is a derivation for the grafting  product $ \curvearrowright^a $ in the sense that
\begin{equs}
 \uparrow^{i} \left( \sigma \curvearrowright^a \tau  \right) =  (\uparrow^{i} \sigma) \curvearrowright^a  \tau +  \sigma \curvearrowright^a \, ( \uparrow^{i} \tau)
\end{equs}
and one has the following right derivation property:
\begin{equs}
\uparrow^{i}_{N_\tau} \left( \sigma \curvearrowright^a \tau  \right) =    \sigma \curvearrowright^a \, ( \uparrow^{i} \tau)
\end{equs}
where $\uparrow^{i}_{N_\tau} $ is defined as $\uparrow^{i}  $ but with $ N_{\tau} \sqcup N_{\sigma} $ replaced by $ N_{\tau} $. These two properties are clearly not true for the deformed pre-Lie products $ \widehat{\curvearrowright}^{a}  $. They are, however, true up to some deformation. This is made formal by the following proposition:

\begin{proposition} \label{prop_non_com}
One has, for all decorated trees $ \sigma, \tau $ and $ a \in \mathbb{N}^{d+1} $, $ i \in \lbrace 0,...,d \rbrace $:
\begin{equs} \label{non_commutation}
\uparrow^{i}_{N_\tau} \left( \sigma \widehat{\curvearrowright}^a \tau  \right) =    \sigma \widehat{\curvearrowright}^a \, ( \uparrow^{i} \tau)  - \sigma \widehat{\curvearrowright}^{a-e_i} \tau 
\end{equs}
\end{proposition}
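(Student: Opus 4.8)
The plan is to reduce the deformed identity to the undeformed right-derivation property recalled just above the proposition, and to pay for the deformation with a single combinatorial identity on multi-index binomial coefficients. First I would expand the left-hand side with the definition of $\widehat{\curvearrowright}^a$,
\begin{equs}
\uparrow^{i}_{N_\tau}\bigl(\sigma \widehat{\curvearrowright}^a \tau\bigr) = \sum_{v \in N_\tau}\sum_{\ell} \binom{\Labn_v}{\ell}\, \uparrow^{i}_{N_\tau}\Bigl(\sigma \curvearrowright_v^{a-\ell} (\uparrow_v^{-\ell}\tau)\Bigr).
\end{equs}
Since $\uparrow_v^{-\ell}$ only alters a node decoration and leaves the node set $N_\tau$ intact, each summand is a grafting of $\sigma$ onto a tree whose vertices are exactly $N_\tau$; the vertex-wise form of the right-derivation property — valid because grafting at a fixed vertex leaves the decorations of the lower tree untouched — then replaces each summand by $\sigma \curvearrowright_v^{a-\ell}\bigl(\uparrow^i(\uparrow_v^{-\ell}\tau)\bigr)$, with $\uparrow^i = \sum_{w \in N_\tau}\uparrow_w^{e_i}$.

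Next I would expand the first term on the right-hand side the same way. Writing $\uparrow^i \tau = \sum_{w}\uparrow_w^{e_i}\tau$ and noting that raising the decoration at $w$ by $e_i$ affects the binomial weight only at the vertex $v = w$, I would obtain
\begin{equs}
\sigma \widehat{\curvearrowright}^a (\uparrow^i \tau) = \sum_{v,w}\sum_{\ell}\binom{\Labn_v + [v=w]\,e_i}{\ell}\, \sigma\curvearrowright_v^{a-\ell}\bigl(\uparrow_v^{-\ell}\uparrow_w^{e_i}\tau\bigr).
\end{equs}
As $\uparrow_w^{e_i}$ and $\uparrow_v^{-\ell}$ act on decorations they commute, so the trees in the two expansions agree; splitting both double sums into the off-diagonal part $v \neq w$ and the diagonal part $v = w$, the off-diagonal contributions coincide and cancel in the difference $\uparrow^{i}_{N_\tau}(\sigma \widehat{\curvearrowright}^a \tau) - \sigma \widehat{\curvearrowright}^a (\uparrow^i \tau)$.

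It then remains to match the surviving diagonal remainder with $-\sigma\widehat{\curvearrowright}^{a-e_i}\tau$. Collecting the $v=w$ terms and using $\uparrow_v^{e_i}\uparrow_v^{-\ell} = \uparrow_v^{e_i-\ell}$ gives
\begin{equs}
\sum_{v}\sum_{\ell}\Bigl[\binom{\Labn_v}{\ell} - \binom{\Labn_v+e_i}{\ell}\Bigr]\,\sigma\curvearrowright_v^{a-\ell}\bigl(\uparrow_v^{e_i-\ell}\tau\bigr),
\end{equs}
and after the substitution $\ell \mapsto \ell + e_i$ (the $\ell_i = 0$ terms drop out, since the bracket then vanishes) this equals $-\sigma\widehat{\curvearrowright}^{a-e_i}\tau$ precisely when, for all $v$ and $\ell$,
\begin{equs}
\binom{\Labn_v+e_i}{\ell+e_i} - \binom{\Labn_v}{\ell+e_i} = \binom{\Labn_v}{\ell}.
\end{equs}
This coefficient identity is the crux: it follows from the factorisation $\binom{n}{k} = \prod_j \binom{n_j}{k_j}$ of the multi-index binomial coefficient together with Pascal's rule $\binom{n_i+1}{k_i+1} = \binom{n_i}{k_i} + \binom{n_i}{k_i+1}$ applied in the $i$-th slot, the remaining factors cancelling. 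The hard part is not this computation but the careful bookkeeping that isolates the diagonal $v=w$ terms and checks that the raising and lowering operators at a common node combine correctly before Pascal's rule is applied.
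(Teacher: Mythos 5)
Your proof is correct and takes essentially the same route as the paper's: expand the terms from the definition of $\widehat{\curvearrowright}^a$, shift the summation index in the $\sigma\,\widehat{\curvearrowright}^{a-e_i}\,\tau$ term, and conclude with Pascal's rule $\binom{\Labn_v}{\ell-e_i}+\binom{\Labn_v}{\ell}=\binom{\Labn_v+e_i}{\ell}$ applied in the $i$-th slot of the multi-index binomial. The only difference is bookkeeping: you explicitly display and cancel the off-diagonal ($v\neq w$) contributions, which the paper's displayed expansions tacitly suppress because they coincide and cancel between $\uparrow^{i}_{N_\tau}\bigl(\sigma\,\widehat{\curvearrowright}^a\,\tau\bigr)$ and $\sigma\,\widehat{\curvearrowright}^a(\uparrow^{i}\tau)$.
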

\begin{proof}
 Identity \eqref{non_commutation} is a consequence of the following:
 \begin{equs}
  \sigma \widehat{\curvearrowright}^a \, ( \uparrow^{i} \tau) & =
    \sum_{v\in N_{\tau}}\sum_{\ell\in\mathbb{N}^{d+1}}{\Labn_v + e_i \choose \ell} \sigma  \curvearrowright_v^{a-\ell}(\uparrow_v^{-\ell + e_i} \tau)
    \\ \sigma \widehat{\curvearrowright}^{a-e_i} \,  \tau & = 
    \sum_{v\in N_{\tau}}\sum_{\ell\in\mathbb{N}^{d+1}}{\Labn_v  \choose \ell} \sigma  \curvearrowright_v^{a-\ell - e_i}(\uparrow_v^{-\ell } \tau)
    \\ & =  \sum_{v\in N_{\tau}}\sum_{\ell\in\mathbb{N}^{d+1}}{\Labn_v  \choose \ell-e_i} \sigma  \curvearrowright_v^{a-\ell }(\uparrow_v^{-\ell + e_i } \tau) \\
    \uparrow^{i}_{N_\tau} \left( \sigma \widehat{\curvearrowright}^a \tau  \right) & = \sum_{v\in N_{\tau}}\sum_{\ell\in\mathbb{N}^{d+1}}{\Labn_v  \choose \ell} \sigma  \curvearrowright_v^{a-\ell }(\uparrow_v^{-\ell + e_i } \tau).
 \end{equs}
Then, we conclude by the fact that:
\begin{equs}
{\Labn_v  \choose \ell-e_i} + {\Labn_v  \choose \ell} = {\Labn_v + e_i  \choose \ell}.
\end{equs}
\end{proof}
\begin{remark}
The extra term in \eqref{non_commutation} can be seen as a term of lower order as the decorations of the grafting operator are decreased by $e_{i}$.
\end{remark}

Recently, a different encoding of iterated integrals has been introduced in \cite{LOT} based on multi-indices. Let's briefly recall the definitions given by the authors. We suppose we are given two sets of abstract variables $ (z_k)_{k \in \mathbb{N}} $ and $ (z_{n})_{n \in \mathbb{N}^{d+1}} $. The $ z_k $ encode nodes of arity $ k $ and the $ z_n $ are monomials. Multi-indices $ \beta  $ over $ \mathbb{N} $ and $ \mathbb{N}^{d+1} $ measure the frequency of the variables $ z_k, z_n $, so that we
can write monomials
\begin{equs}
z^{\beta} : = \prod_{k \in \mathbb{N}, \, n \in \mathbb{N}^{d+1}} z_k^{\beta(k)} z_{n}^{\beta(n)}. 
\end{equs}
One can write $ \beta $ according  to the  canonical basis of $ \bar{e}_k, \bar{e}_n $ encoding the $ z_k, z_{n} $:
\begin{equs}
\beta = \sum_{k \in \mathbb{N}} \beta(k) \bar{e}_k + \sum_{n \in \mathbb{N}^{d+1}} \beta(n) \bar{e}_n.
\end{equs}
We introduce a family of derivations on these multi-indices: $ D^{(0)}, (D^{(n)})_{n \neq 0} $ and $ \partial_i,  \,  i \in \lbrace 0,...,d\rbrace $.  These are defined in the following way:
\begin{equs}
D^{(0)} = \sum_{k \in \mathbb{N}} (k+1) z_{k+1} \partial_{z_k},  \quad D^{(n)} = \partial_{z_n}, \, n \neq 0,
\end{equs}
where $ \partial_{z_k} $ is the derivative in the coordinates $ z_k $. The action of this derivative operator corresponds to increasing the arity of a node by one.
Then, the derivatives $ \partial_i $ are given by:
\begin{equs}
\partial_i  = \sum_{n} (n_i +1 ) z_{n + e_i} D^{(n)}.
\end{equs}
For all these derivatives, we will use a matrix representation $ (D^{(0)})^{\gamma}_{\beta}, (D^{(n)})^{\gamma}_{\beta} $ and $ (\partial_i)^{\gamma}_\beta$ where $ \gamma $ and $ \beta  $ are multi-indices.
\begin{align*}
	(D^{({ 0})})_\beta^\gamma=\sum_{k\ge 0}\left\{\begin{array}{cl}
		(k+1)\gamma(k)&\mbox{if}\;\gamma+\bar{e}_{k+1}=\beta+\bar{e}_k\\
		0&\mbox{otherwise}\end{array}\right\}
\end{align*}
\begin{align*}
	(D^{({ n})})_\beta^\gamma=\left\{\begin{array}{cl}
		\gamma({ n})&\mbox{if}\;\gamma=\beta+\bar{e}_{ n}\\
		0&\mbox{otherwise}\end{array}\right\}\quad\mbox{for}\;{ n}\not={ 0}
\end{align*}
\begin{equs} 
	\begin{aligned}
		(\partial_i)_\beta^\gamma&=\sum_{k\ge 0}\left\{\begin{array}{cl}
			(k+1)\gamma(k)&\mbox{if}\;\gamma+\bar{e}_{k+1}+\bar{e}_{e_i}=\beta+e_k \\
			0&\mbox{otherwise}\end{array}\right\} \\
		&+\sum_{{ n}\not=0}\left\{\begin{array}{cl}
			(n_i+1)\gamma({ n})&\mbox{if}\;\gamma+\bar{e}_{{ n}+ e_i}=\beta+\bar{e}_{ n}\\
			0&\mbox{otherwise}\end{array}\right\}.
	\end{aligned}
\end{equs}
The non-commutative property  \eqref{non_commutation}
 appears at the level of the derivations. Indeed, one has:
\begin{equs} \label{non-commutation_2}
\partial_i D^{(n)} = D^{(n)} \partial_i + n_i D^{(n-e_i)}.
\end{equs}
This motivates the introduction of a Lie bracket in the next section for taking into account that non-commutative property. We note that the encoding given by multi-indices does not precisely correspond to the original encoding via decorated trees. Indeed, the decorated trees associated to them contain more node decorations, in the spirit of \cite{BCCH} where a new class of trees has been introduced for proving a pre-Lie morphism property on some elementary differentials. The new decorated trees are of this form:
\begin{equs} \label{trees_edges_no_decoration}
\tau =  \Xi \prod_{j=1}^{k_1} X^{\ell_j}  \prod_{i=1}^{k_2} I(\tau_i)
\end{equs}
where now the noise $ \Xi $ systematically appears at every node and can be omitted in the notation. All the edges have the same decoration $ (I,0) $. The main novelty are the new decorations given by the product of the $ X^{m_j} $ and this time, we have:
\begin{equs}
\prod_{j=1}^{k_1} X^{\ell_j} \neq X^{\sum_{j=1}^{k_1} \ell_j}. 
\end{equs}
We denote the set of these decorated trees by $ \mathfrak{T}_0 $. 
Then, the mapping to a multi-index can be performed recursively via the following map $ \Psi $:
\begin{equs} \label{pre_Lie_isomorphism}
\Psi(\tau) = (k_1 + k_2)! \, z_{k_1 + k_2}  \prod_{j=1}^{k_1 } (\ell_j !) \, z_{\ell_j} \prod_{i=1}^{k_2} \Psi(\tau_i), \quad \Psi(\Xi) = \one.
\end{equs}
The deformed pre-Lie product in this context takes the form:
\begin{equs} \label{pre_lie_bis}
	\begin{aligned}
\sigma \widehat{\curvearrowright}^a \tau & :=\sum_{v\in N_{\tau}}\sum_{\ell\in \lbrace 0, a \rbrace}{\Labn_v \choose \ell} \sigma  \widehat{\curvearrowright}_v^{a-\ell}(\hat{\uparrow}_v^{-\ell} \tau) \\ & = \sigma \curvearrowright^a \tau +  \sum_{v\in N_{\tau}} \sigma  \widehat{\curvearrowright}_v^{0}(\hat{\uparrow}_v^{-a} \tau)
\end{aligned}
\end{equs}
where $ \hat{\uparrow}_v^{-a} $ removes one $ X^{a} $ at the node $ v $ otherwise it is equal to zero. Then, we need to put an extra restriction as decorated trees described in \eqref{trees_edges_no_decoration} have only zero edge-decorations. Therefore, when $ a = 0 $, one has
\begin{equs}
	\sigma \widehat{\curvearrowright}^a \tau = \sigma \curvearrowright^a \tau.
\end{equs}
Otherwise, one has
\begin{equs}
	\sigma \widehat{\curvearrowright}^a \tau = \sum_{v\in N_{\tau}} \sigma  \widehat{\curvearrowright}_v^{0}(\hat{\uparrow}_v^{-a} \tau).
\end{equs}
 Such deformation is reminiscent of the one used in the context of numerical analysis in \cite{BS,BM22}, where there is only one term of lower order. 
In \cite{OSSW,LOT}, the authors consider only a subclass of multi-indices, that is
\begin{equs}
\{ z^{\gamma}D^{(n)} \}_{[\gamma] \geq 0 }
\end{equs}
where $ [\gamma] $ is given by:
\begin{equation*}
[\gamma] = \sum_{k \geq 0} k \gamma(k) - \sum_{n \neq 0} \gamma(n). 
\end{equation*}
The condition $ [\gamma] \geq 0 $ corresponds to the fact that we have fewer monomial multi-indices than branches coming from the $ \bar{e}_k $. The authors are also more restrictive by projecting according to the homogeneity of the multi-indices given by a map $ \gamma \mapsto |\gamma|_{\s} $ depending on the chosen scaling $ \s $. The extra condition is given by: $ |\gamma|_{\s} - |n|_{\s} > 0 $.
In the context of decorated trees, it corresponds to planted trees with positive homogeneity.

\section{ A post-Lie algebra for decorated trees }

In this section, we explain how the previous formalism can be expressed directly on decorated trees in order to obtain the coproduct $\Delta_{2}$ appearing in \cite{BM22} which has been introduced in \cite{reg,BHZ}. This coproduct is crucial for defining the Hopf algebra governing the recentering procedure in Regularity Structures. It is used for constructing recentered iterated integrals that are the building blocks of solutions for singular SPDEs. 

 We give analogues to the spaces introduced in the previous section by using the letter $\mathcal{V}$. We define the following spaces:
\begin{equs}
\mathcal{V} & = \Big \langle  \{ I_a(\tau), \, a \in \mathbb{N}^{d+1}, \, \tau \in \mathfrak{T} \} \cup \{ X_{i} \}_{i = 0,..., d} \Big \rangle_{\mathbb{R}}, \\
\tilde{\mathcal{V}} & = \Big  \langle \{ I_a(\tau), \, a \in \mathbb{N}^{d+1}, \, \tau \in \mathfrak{T} \} \Big \rangle_{\mathbb{R}}.
\end{equs}
The  space $ \tilde{\mathcal{V}} $ is the linear span of planted decorated trees and $ \mathcal{V}$ is the linear span of planted trees with the monomials $ X_i $. 
We introduce a Lie bracket and a product on the space $\mathcal{V}$ that are compatible with one another and will give us a post-Lie algebraic structure.

\begin{definition} We define a product $ \widehat{\triangleright} $ on $ \mathcal{V} $ for every $ a,b \in \mathbb{N}^{d+1}, \, i,j \in  \lbrace 0,\ldots,d\rbrace $ as:
\begin{equs} \label{axiom_post}
X_i \, \widehat{\triangleright}  \,  I_{a}(\tau) = \uparrow^i I_{a}(\tau), \quad I_{a}(\tau) \,  \widehat{\triangleright}  \, X_{i} = 0, \quad  X_i \, \widehat{\triangleright}  \, X_{j} = 0,
\end{equs}
and 
\begin{equs}
I_{a}(\sigma) \, \widehat{\triangleright}  \, I_{b}(\tau) = I_{a}(\sigma) \, \widehat{\curvearrowright} \,I_{b}(\tau).
\end{equs}
\end{definition}

\begin{definition} \label{def_lie_trees}
We define the Lie bracket on $\mathcal{V}$ as $[x, y]_{0} = 0$ for $x, y \in \tilde{\mathcal{V}}$, $[x, y]_{0} = 0$ for $x, y \in \langle \ X_{i} \ \rangle_{\mathbb{R}}$ and as
\begin{equation} \label{Lie-bracket}
[I_a(\tau),X_i]_0 =  I_{a-e_i}(\tau). 
\end{equation}
\end{definition}

\begin{remark}
Note that the image of this bracket lies inside $\tilde{\mathcal{V}}$. Moreover, the definition of the Lie bracket is similar to the one on multi-indices.
\end{remark}

\begin{theorem}
The triple $( \mathcal{V}, [.,.]_{0}, \widehat{\triangleright})$ is a post-Lie algebra.
\end{theorem}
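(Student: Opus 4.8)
The plan is to verify the three defining properties directly: that $[.,.]_0$ is a Lie bracket, that $\widehat{\triangleright}$ satisfies the derivation identity \eqref{ident1}, and that it satisfies the associator identity \eqref{ident2}. Since all three maps are bilinear, it suffices to test them on the two families of generators of $\mathcal{V}$, namely the planted trees $I_a(\tau)\in\tilde{\mathcal{V}}$ and the monomials $X_i$. Two structural facts organise the whole case analysis: the bracket $[.,.]_0$ takes values inside $\tilde{\mathcal{V}}$ and vanishes on $\tilde{\mathcal{V}}\times\tilde{\mathcal{V}}$ and on $\langle X_i\rangle\times\langle X_i\rangle$, so a nonzero bracket always pairs one planted tree with one $X_i$; and the product $\widehat{\triangleright}$ either acts by the deformed grafting $\widehat{\curvearrowright}$ (when both arguments are planted) or by $\uparrow^i$ (when the left argument is some $X_i$ and the right is planted), and kills every product whose right argument is an $X_j$.

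First I would dispatch the Lie algebra axioms. Antisymmetry holds by the defining relation $[I_a(\tau),X_i]_0=I_{a-e_i}(\tau)$ together with the vanishing on the pure subspaces. For the Jacobi identity I would run through the four types of triples; because the bracket lands in $\tilde{\mathcal{V}}$ and is trivial there, at most one nesting can survive, and the only genuinely nontrivial configuration is $(I_a(\sigma),X_i,X_j)$, where the three cyclic terms collapse to $I_{a-e_i-e_j}(\sigma)-I_{a-e_j-e_i}(\sigma)=0$. Identity \eqref{ident1} is handled by the same case split: whenever $[y,z]_0\neq 0$, say $y=I_b(\tau)$ and $z=X_j$, both sides reduce to the statement that lowering the outer edge decoration by $e_j$ commutes with $\uparrow^i$ (when $x=X_i$), respectively with the deformed grafting (when $x=I_a(\sigma)$), both immediate since these operations act on disjoint data, namely inner node decorations and inner grafting versus the outer planted edge; and whenever $[y,z]_0=0$ the right-hand side lies in $[\tilde{\mathcal{V}},\tilde{\mathcal{V}}]_0=0$ or in a product with an $X_j$ on the right, hence also vanishes.

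The substance is in \eqref{ident2}. When $[x,y]_0=0$ both sides must agree: if $x,y$ are both planted the identity becomes the left-symmetry of the associator of $\widehat{\curvearrowright}$, which holds because $\widehat{\curvearrowright}$ is a pre-Lie product; if $x,y$ are both among the $X$'s it reduces to $\uparrow^i\uparrow^j=\uparrow^j\uparrow^i$; the subcases with an $X_j$ on the right vanish on both sides. The essential case is $x=I_a(\sigma)$, $y=X_i$, $z=I_c(\rho)$, the opposite order following from the manifest antisymmetry of both sides in $x,y$. Here the left-hand side is $[I_a(\sigma),X_i]_0\,\widehat{\triangleright}\,I_c(\rho)=I_{a-e_i}(\sigma)\,\widehat{\curvearrowright}\,I_c(\rho)=I_c(\sigma\,\widehat{\curvearrowright}^{a-e_i}\rho)$.

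Expanding the associators and using $I_a(\sigma)\,\widehat{\triangleright}\,X_i=0$, the right-hand side collapses to $P\,\widehat{\curvearrowright}(\uparrow^i Q)-\uparrow^i(P\,\widehat{\curvearrowright}\,Q)+(\uparrow^i P)\,\widehat{\curvearrowright}\,Q$ with $P=I_a(\sigma)$ and $Q=I_c(\rho)$, that is, to minus the failure of $\uparrow^i$ to be a derivation of $\widehat{\curvearrowright}$. Decomposing $\uparrow^i=\uparrow^i_{N_\sigma}+\uparrow^i_{N_\tau}$ and invoking Proposition~\ref{prop_non_com}, this defect equals $-\sigma\,\widehat{\curvearrowright}^{a-e_i}\rho$ inside $I_c$, so the right-hand side, being its negative, equals $I_c(\sigma\,\widehat{\curvearrowright}^{a-e_i}\rho)$ and matches the left-hand side. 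This final case is the main obstacle: it is precisely where the non-commutation \eqref{non_commutation} enters and where a mere pre-Lie structure would fail, forcing the passage to a post-Lie product. The only additional care needed throughout is the bookkeeping for how $\uparrow^i$ acts on a planted tree so that $X_i\,\widehat{\triangleright}\,I_a(\tau)$ stays inside $\tilde{\mathcal{V}}$; with that convention fixed, the computation above is exactly Proposition~\ref{prop_non_com} transported along $I_c(\,\cdot\,)$.
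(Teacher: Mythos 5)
Your proof is correct and takes essentially the same route as the paper's: a case analysis on generators, with the $z=X_i$ case and the doubly-planted case (pre-Lie property of $\widehat{\curvearrowright}$) dispatched first, and the decisive mixed case $x=I_a(\sigma)$, $y=X_i$, $z=I_c(\rho)$ resolved by splitting $\uparrow^{i}$ over $N_\sigma\sqcup N_\rho$ and invoking Proposition~\ref{prop_non_com}. Your two additions --- the explicit verification that $[.,.]_0$ satisfies the Jacobi identity (which the paper leaves implicit in Definition~\ref{def_lie_trees}) and the packaging of the key computation as minus the derivation defect of $\uparrow^{i}$ with respect to $\widehat{\curvearrowright}$ --- are only cosmetic refinements of the same argument.
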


\begin{proof}
One has to check:
\begin{equs} \label{ident1_b}
x \, \widehat{\triangleright} \, [y,z]_0 = [x  \, \widehat{\triangleright} \, y,z]_0 + [y, x  \, \widehat{\triangleright} \, z]_0
\end{equs}
and
\begin{equs} \label{ident2_b}
[x,y]_0 \, \widehat{\triangleright} \, z = a_{\widehat{\triangleright}}(x,y,z) - a_{\widehat{\triangleright}}(y,x,z).
\end{equs}
It is easy to check \eqref{ident1_b}, for $ x= X_i $. Then, if we consider $ x = I_{a}(\sigma) $ by symmetry we can restrict ourselves to the case $ y = I_a(\sigma) $ and $ z= X_i $ which will be non zero. We have
\begin{equs}
I_a(\sigma) \, \widehat{\triangleright} \, [I_b(\tau),X_i]_0 = I_a(\sigma) \, \widehat{\curvearrowright} \, I_{b-e_i}(\tau) .
\end{equs}
We conclude by the fact that:
\begin{equation*}
 [ I_a(\sigma) \, \widehat{\triangleright} \, I_b(\tau),X_i ]_0 = I_{b-e_i}( I_a(\sigma) \, \widehat{\curvearrowright} \tau) = I_a(\sigma) \, \widehat{\curvearrowright} \, I_{b-e_i}(\tau).
\end{equation*}
It remains to show \eqref{ident2_b}. If $ z =X_i $, then it is zero on both sides. Let us consider $ z = I_{b}(\tau) $. If $ x $ and $ y $ are both planted trees, the fact that $ \widehat{\curvearrowright} $ is  a pre-Lie product gives the answer. For symmetry reason, we can consider $ x = I_a(\sigma) $ and $ y = X_i $.
From the left hand side, we get:
\begin{equation*}
[x,y]_0 \, \widehat{\triangleright} \, z = I_{a-e_i}(\sigma) \, \widehat{\curvearrowright} \, I_b(\tau).
\end{equation*}
From the right hand side, only two terms remain:
\begin{equation*}
\begin{aligned}
a_{\widehat{\triangleright}}(x,y,z)  & = I_a(\sigma) \, \widehat{\triangleright} \, ( X_i \, \widehat{\triangleright} \, I_b(\tau)) - (I_a(\sigma) \, \widehat{\triangleright} \, X_i) \, \widehat{\triangleright} \, I_b(\tau) 
\\ & =  I_{a}(\sigma) \, \widehat{\curvearrowright} \, I_b( \uparrow^i \tau)
\end{aligned}
\end{equation*}
because $ I_a(\sigma) \, \widehat{\triangleright} \, X_i =0  $. Then
\begin{equation*}
\begin{aligned}
 a_{\widehat{\triangleright}}(y,x,z) & = X_i \, \widehat{\triangleright} \, (I_a(\sigma)  \, \widehat{\triangleright} \, I_b(\tau)) - (X_i \, \widehat{\triangleright} \, I_a(\sigma)) \, \widehat{\triangleright} \, I_b(\tau) \\
 & = \ \uparrow^{i}(I_a(\sigma) \, \widehat{\curvearrowright} \, I_b(\tau) ) - (\uparrow^{i}I_{a}(\sigma)) \, \widehat{\curvearrowright} \, I_{b}(\tau) 
 \\ &= I_b( \uparrow^i_{N_{\tau}} (I_{a}(\sigma) \, \widehat{\curvearrowright} \,  \tau))
 \end{aligned}
\end{equation*}
where we have used 
\begin{equs}
 I_{b} ( \uparrow^{i} (I_{a}(\sigma) \, \widehat{\curvearrowright} \, \tau)) - I_{b}(I_{a}(\uparrow^{i} \sigma) \, \widehat{\curvearrowright} \, \tau))
 = I_b( \uparrow^i_{N_{\tau}} (I_{a}(\sigma) \, \widehat{\curvearrowright} \,  \tau)).
\end{equs}
Indeed by definition, one has:
\begin{equs}
\uparrow^{i} (I_{a}(\sigma) \, \widehat{\curvearrowright} \, \tau) & = \sum_{v \in N_{\sigma} \sqcup N_{\tau}}  \uparrow^{i}_v (I_{a}(\sigma) \, \widehat{\curvearrowright} \, \tau) 
\\ & = \sum_{v \in N_{\sigma} }  \uparrow^{i}_v (I_{a}(\sigma) \, \widehat{\curvearrowright} \, \tau) + \sum_{v \in  N_{\tau}}  \uparrow^{i}_v (I_{a}(\sigma) \, \widehat{\curvearrowright} \, \tau)
\\ & = (\uparrow^{i} I_{a}( \sigma)) \, \widehat{\curvearrowright} \, \tau +  \uparrow^{i}_{N_{\tau}} (I_{a}(\sigma) \, \widehat{\curvearrowright} \, \tau).
\end{equs}
 In the end, we get
\begin{equation*}
a_{\widehat{\triangleright}}(x,y,z)  - a_{\widehat{\triangleright}}(y,x,z) =  I_{a}(\tau) \, \widehat{\curvearrowright} \, I_b( \uparrow^i \sigma) - I_b( \uparrow^i_{N_{\sigma}} ( I_{a}(\tau) \, \widehat{\curvearrowright} \,  \sigma )).
\end{equation*}
The equality between the two expressions is given by Proposition~\ref{prop_non_com}.
\end{proof}

\begin{corollary}
The bracket $ [[.,.]] $ defined by 
 $$ [[x, y]] = [x, y]_{0} + x \, \widehat{\triangleright} \, y - y \, \widehat{\triangleright} \, x
 $$
 for every $ x, y \in \mathcal{V} $, is a Lie bracket on $\mathcal{V}$.
\end{corollary}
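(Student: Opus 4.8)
The plan is to observe that this corollary is an immediate instance of the general principle, recalled in Section~\ref{section::Post_Lie} just after Definition~\ref{definition_post_lie}, that the bracket $[[.,.]]$ attached to any post-Lie algebra is again a Lie bracket. Since the preceding theorem establishes that $(\mathcal{V}, [.,.]_0, \widehat{\triangleright})$ is a post-Lie algebra, this principle applies verbatim with $[.,.] = [.,.]_0$ and $\triangleright = \widehat{\triangleright}$, yielding the claim. Thus no new computation specific to decorated trees is required; everything is inherited from the abstract post-Lie identities \eqref{ident1_b} and \eqref{ident2_b} that were just verified.

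For completeness I would spell out why the general principle holds, i.e.\ why $[[x,y]] = [x,y]_0 + x \, \widehat{\triangleright} \, y - y \, \widehat{\triangleright} \, x$ defines a Lie bracket. Bilinearity is clear, and antisymmetry is immediate: $[.,.]_0$ is antisymmetric and the symmetrised product $x \, \widehat{\triangleright} \, y - y \, \widehat{\triangleright} \, x$ is manifestly antisymmetric in $x,y$. The only substantive point is then the Jacobi identity for $[[.,.]]$.

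To check Jacobi, I would expand the cyclic sum $\sum_{\mathrm{cyc}} [[[[x,y]],z]]$ by substituting the definition of $[[.,.]]$ and sorting the resulting terms into three groups: the purely bracket terms $\sum_{\mathrm{cyc}} [[x,y]_0,z]_0$, which vanish because $[.,.]_0$ is a Lie bracket (built into Definition~\ref{def_lie_trees}); the mixed terms in which exactly one factor of $\widehat{\triangleright}$ appears, which cancel using \eqref{ident1_b}, the statement that $x \, \widehat{\triangleright} \, (-)$ is a derivation of $[.,.]_0$; and the terms carrying two factors of $\widehat{\triangleright}$, which assemble into cyclic sums of associators $a_{\widehat{\triangleright}}$ and cancel using \eqref{ident2_b}, which expresses $[x,y]_0 \, \widehat{\triangleright} \, z$ as the antisymmetrised associator. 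The key input is that the failure of $\widehat{\triangleright}$ to be associative, measured by $a_{\widehat{\triangleright}}$, is exactly compensated by the action of the bracket, so the two post-Lie axioms are precisely what is needed for the cyclic sum to telescope to zero.

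The main obstacle is purely organisational: the bookkeeping of the terms in the expanded cyclic sum and matching them correctly against \eqref{ident1_b} and \eqref{ident2_b}. There is no conceptual difficulty, and in particular nothing here depends on the tree combinatorics beyond the already-proven post-Lie axioms; in practice I would simply invoke the abstract post-Lie result of Section~\ref{section::Post_Lie} and omit the re-derivation.
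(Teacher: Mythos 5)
Your proposal is correct and matches the paper's approach exactly: the paper states this as an immediate corollary of the theorem that $(\mathcal{V}, [.,.]_{0}, \widehat{\triangleright})$ is a post-Lie algebra, relying on the general fact recalled in Section~\ref{section::Post_Lie} that any post-Lie algebra yields a Lie bracket $[[x,y]] = [x,y] + x \triangleright y - y \triangleright x$, and gives no further computation. Your supplementary verification of that general fact (antisymmetry plus the three-group cancellation in the cyclic Jacobi sum via \eqref{ident1_b} and \eqref{ident2_b}) is sound, though the paper itself omits it.
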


\begin{remark} The relation between the two Lie algebras $  [[.,.]]  $ and $ [., .]_{0} $ is central to the notion of post-Lie algebra. For instance in \cite[Def. 2.1]{BD}, the authors introduce post-Lie algebras by starting with the two Lie algebras. The post-Lie product then has to check several compatibility conditions with these two Lie algebras.
	\end{remark}

We denoted by $ U(\mathcal{V}_0) $ the enveloping algebra with the Lie bracket $ [.,.]_0 $ and  by $  U(\mathcal{V}) $ the enveloping algebra with the Lie bracket $ [[.,.]] $. We also set $ * $ to be the product obtained by the Guin-Oudom type procedure given in Section~\ref{section::Post_Lie}.  
\begin{theorem} \label{main_result_trees}
The Hopf algebra $U(\mathcal{V})$ is isomorphic to the Hopf algebra $(U(\mathcal{V}_0), *, \Delta)$.
\end{theorem}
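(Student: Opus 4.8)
The plan is to obtain Theorem~\ref{main_result_trees} as an immediate application of Theorem~\ref{main_theorem_section_2} to the post-Lie algebra $(\mathcal{V}, [.,.]_0, \widehat{\triangleright})$ constructed just above. Since the preceding theorem establishes that this triple satisfies the two post-Lie identities \eqref{ident1} and \eqref{ident2}, all the hypotheses needed to run the Guin--Oudom type machinery of Section~\ref{section::Post_Lie} are already in place, and essentially no further combinatorics is required.

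First I would recall that the post-Lie product $\widehat{\triangleright}$ extends, via the formulas collected in Section~\ref{section::Post_Lie}, from $\mathcal{V}$ to the whole enveloping algebra $U(\mathcal{V}_0)$, and that together with the coproduct $\Delta$ (which renders the elements of $\mathcal{V}$ primitive and is extended multiplicatively for the concatenation product) it produces the associative product $*$ of \eqref{product_1}. This endows $(U(\mathcal{V}_0), *, \Delta)$ with the structure of a Hopf algebra, so that the right-hand side of the asserted isomorphism is well defined.

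Next I would invoke Theorem~\ref{main_theorem_section_2}, whose hypotheses are exactly the post-Lie axioms just verified. It yields a Hopf algebra isomorphism between $(U(\mathcal{V}_0), *, \Delta)$ and $U(\bar{\mathcal{V}})$, where $\bar{\mathcal{V}}$ denotes $\mathcal{V}$ equipped with the bracket $[[.,.]]$ introduced in the corollary above. By the notational convention fixed just before the statement, $U(\bar{\mathcal{V}})$ is precisely what we have denoted $U(\mathcal{V})$, so the isomorphism reads $(U(\mathcal{V}_0), *, \Delta) \cong U(\mathcal{V})$, which is the claim.

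Since the substantive work---checking the post-Lie compatibility, which itself rests on the non-commutation Proposition~\ref{prop_non_com}---has already been carried out, the only point requiring care is the bookkeeping of which Lie bracket each enveloping algebra carries: $U(\mathcal{V}_0)$ is built from $[.,.]_0$ while the target $U(\mathcal{V})$ is built from $[[.,.]]$, and it is precisely the interplay between the two, mediated by $\widehat{\triangleright}$, that makes the deformed product $*$ land in the enveloping algebra for the richer bracket. The main (and rather minor) obstacle is thus confirming that the two coalgebra structures coincide, so that the map produced is an isomorphism of Hopf algebras rather than merely of associative algebras; this coincidence is built into the conclusion of Theorem~\ref{main_theorem_section_2}, since both coproducts are determined by declaring $\mathcal{V}$ primitive.
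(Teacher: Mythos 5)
Your proposal is correct and matches the paper's proof, which consists of the single line ``This is a direct application of Theorem~\ref{main_theorem_section_2}''; your additional remarks on the extension of $\widehat{\triangleright}$ to $U(\mathcal{V}_0)$ and on the identification $U(\bar{\mathcal{V}}) = U(\mathcal{V})$ are just the unwinding of that application.
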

\begin{proof}
This is a direct application of Theorem~\ref{main_theorem_section_2}.
\end{proof}

Following \cite{LOT}, one makes use of this isomorphism and the fact that the bracket $[.,.]_{0}$ vanishes on $\tilde{\mathcal{V}}$ to obtain the deformed coproduct given in \cite{reg,BHZ,BM22}, by selecting a basis for $U(\mathcal{V})$ that is symmetric with respect to the elements of the basis of $\tilde{\mathcal{V}}$. More precisely, along with the aid of the Poincare-Birkhoff-Witt theorem and after choosing to order the $X_{i}$  according to their indices, one obtains a basis of the form:
\begin{equs}
B_{(\textbf{F}, \textbf{m})} = \
\prod_{i = 0, ..., d} X_{i}^{m_{i}}  I_{a_{1}}(\sigma_{1}) \cdot \cdot \cdot I_{a_{n}}(\sigma_{n}) 
\end{equs}
where $\textbf{F} = I_{a_{1}}(\sigma_{1}) \cdot \cdot \cdot I_{a_{n}}(\sigma_n) $ ranges over all forests of planted trees and $\textbf{m} \in \mathbb{N}^{d+1}$. 
\begin{remark}
By virtue of Proposition~\ref{rep_datum} in Section~\ref{section::Post_Lie}, one obtains the following representation of $\mathfrak{\bar{g}}$:
$$
\rho : \mathfrak{\bar{g}} \rightarrow End_{\mathbb{R}}(U(\mathfrak{g}))
$$
$$
I_{a}(\sigma) \mapsto ( A \mapsto \sigma \widehat{\curvearrowright}^{a} A + I_{a}(\sigma)\cdot A ), \quad
X_{i} \mapsto ( A \mapsto \uparrow^{i} A + X_{i} \cdot A )
$$
where $A = B_{(\textbf{F}, \textbf{m})} = \
\prod_{i = 0, ..., d} X_{i}^{m_{i}}  I_{a_{1}}(\sigma_{1}) \cdot \cdot \cdot I_{a_{n}}(\sigma_{n}) $ is a basis element as above and $\cdot$ denotes the associative product in $U(\mathfrak{g})$. We note that, on the second line, in order for us to be able to apply the grafting product, an element $A$ can clearly be interpreted as a rooted tree by merging all the $I_{a_{i}}(\sigma_{i})$ on a single root and adjoining the appropriate label on that root.

\end{remark}

As one can play around using the commutation relations on the universal envelope $U(\mathfrak{g})$, the linear representation given in the remark above gives us hints about the resulting $*$ product on the universal enveloping algebra. Indeed, we shall now proceed to prove that the product $*$ thus obtained is identical to the product $\star_{2}$ that governs the combinatorics of recentering in regularity structures. The product $\star_{2}$ is the dual of the $\Delta_{2}$ coproduct as was proven in \cite{BM22}. Therefore, our Hopf algebra will be proven to be precisely the dual of the Hopf algebra $\CH_{2}$ used in regularity structures for handling the combinatorics of recentering. For this we shall use the following proposition that characterizes the $\star_{2}$ product and was proven in \cite[Prop. 3.17]{BM22}:

\begin{proposition}
Let $\sigma =  X^{k}  \prod_i I_{a_{i}}(\sigma_{i}) $. Then one has:
\begin{equs}
I_{b}(\sigma \star_{2} \tau) = \tilde{\uparrow}^{k}_{N_{\tau}} \( \prod_i I_{a_{i}}(\sigma_{i}) \, \widehat{\curvearrowright} \, I_{b}(\tau) \)
\end{equs}
where $ \tilde{\uparrow}^{k}_{N_{ \tau}} $ is defined by
\begin{equs} \label{splitting_polynomials}
	\tilde{\uparrow}^{k}_{N_{\tau}} =
	\sum_{k = \sum_{v \in N_{\tau}} k_v} \uparrow^{k_v}_{v}.
\end{equs}
\end{proposition}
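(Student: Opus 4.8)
The plan is to use the defining property of $\star_{2}$ as the graded dual of the recentering coproduct $\Delta_{2}$ from \cite{reg,BHZ,BM22}, and to reduce the statement to the two building blocks out of which $\sigma$ is assembled: the planted forest $\prod_i I_{a_{i}}(\sigma_{i})$ and the monomial $X^{k}$. Concretely, I would first recall the recursive description of $\Delta_{2}$ on a planted tree $I_{b}(\tau)$: it produces, on one hand, the recentering of $\tau$ governed by the grafting combinatorics and, on the other hand, the polynomial corrections carrying the symbols $X^{\ell}$ that encode the Taylor jet. Dualizing this recursion yields a rule by which the left factor of $\star_{2}$ acts on $I_{b}(\tau)$ by grafting its planted components and by raising node decorations; the asserted identity is exactly the closed form of this action. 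Note that one cannot shortcut this via the isomorphism of Theorem~\ref{main_result_trees}, since the present proposition is itself an input to the subsequent identification of $*$ with $\star_{2}$, so the argument must stay on the $\Delta_{2}$ side.

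Second, I would dispose of the purely-planted case $k=0$, where the claim reduces to $I_{b}\big(\prod_i I_{a_{i}}(\sigma_{i})\star_{2}\tau\big)=\prod_i I_{a_{i}}(\sigma_{i})\,\widehat{\curvearrowright}\,I_{b}(\tau)$, with the right-hand side read as the simultaneous grafting of the planted components onto $I_{b}(\tau)$. This is the statement that, on planted forests, $\star_{2}$ coincides with the Grossman-Larson/Guin-Oudom product built from the pre-Lie product $\widehat{\curvearrowright}$. I would establish it by induction on the number $n$ of planted trees, peeling off one factor $I_{a_{n}}(\sigma_{n})$ at a time and using both that $\widehat{\curvearrowright}$ is pre-Lie and the Guin-Oudom extension of the product to the enveloping algebra recalled in Section~\ref{section::Post_Lie}; the inductive step is the standard compatibility between coproduct comultiplicativity and the grafting recursion.

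Third, I would reinstate the monomial $X^{k}$. The dual of the polynomial part of $\Delta_{2}$ says that $\star_{2}$-multiplication by a single $X_{i}$ increments one node decoration, summed over all available nodes; iterating and collecting binomial weights produces precisely the operator $\tilde{\uparrow}^{k}_{N_{\tau}}=\sum_{k=\sum_{v}k_{v}}\uparrow^{k_{v}}_{v}$ of \eqref{splitting_polynomials}. The one genuinely delicate point, which I expect to be the main obstacle, is to show that these raising operators act only on the original vertices $N_{\tau}$ and \emph{not} on the vertices contributed by the grafted trees $\sigma_{i}$. This is exactly where Proposition~\ref{prop_non_com} is indispensable: identity \eqref{non_commutation} is the assertion that commuting $\uparrow^{i}$ past $\widehat{\curvearrowright}^{a}$ localizes it to $\uparrow^{i}_{N_{\tau}}$ up to a lower-grading correction $\sigma\,\widehat{\curvearrowright}^{a-e_{i}}\tau$. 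I would therefore run a secondary induction on $|k|_{\s}$, applying \eqref{non_commutation} at each step and matching the resulting correction terms against the binomial coefficients implicit in $\widehat{\curvearrowright}^{a}$ by means of the Vandermonde-type identity $\binom{\Labn_{v}}{\ell-e_{i}}+\binom{\Labn_{v}}{\ell}=\binom{\Labn_{v}+e_{i}}{\ell}$ already exploited in the proof of Proposition~\ref{prop_non_com}. Once this bookkeeping closes, the two inductions combine to give the factorization $\tilde{\uparrow}^{k}_{N_{\tau}}\big(\prod_i I_{a_{i}}(\sigma_{i})\,\widehat{\curvearrowright}\,I_{b}(\tau)\big)$, completing the proof.
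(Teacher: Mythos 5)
The first thing to observe is that the paper contains no proof of this proposition: it is quoted as having been ``proven in \cite[Prop. 3.17]{BM22}'' and is then used as an input to Theorem~\ref{identification_star_star_2}. So there is no internal argument to compare yours against; you are in effect reconstructing the proof from \cite{BM22}. Judged on that basis, your proposal has a genuine gap at its core. The entire content of the statement is the closed form of the dual of $\Delta_{2}$, yet your first step simply declares that ``dualizing this recursion yields a rule \ldots the asserted identity is exactly the closed form of this action''. That is the conclusion, not a step. Nowhere do you write down the explicit formula for $\Delta_{2}$ (the sum over subtrees carrying binomial coefficients $\binom{\Labn_{v}}{\ell}$, shifted edge decorations, and recentering monomials $X^{\ell}/\ell!$ on the cut edges), nor do you perform the pairing computation $\langle \sigma \star_{2}\tau, x\rangle = \langle \sigma\otimes\tau, \Delta_{2}x\rangle$ in the two base cases on which everything else rests: a single planted tree $I_{a}(\sigma)$ acting by $\widehat{\curvearrowright}$, and a single $X_{i}$ acting by raising operators. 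Steps 2 and 3 are bookkeeping organized around these missing computations, so the argument never closes.

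Two further points. First, your description of the polynomial part is not quite right: dually to $\Delta_{2}$, multiplication by $X_{i}$ is not just ``incrementing one node decoration summed over all available nodes''; the $X_{i}$ also pairs against the recentering monomials sitting on cut edges, and it is exactly this pairing that produces the edge-lowering corrections $I_{a-e_{i}}(\cdot)$, i.e.\ the terms $\sigma\,\widehat{\curvearrowright}^{a-e_{i}}\,\tau$. In the final formula these corrections are invisible only because they are absorbed into $\tilde{\uparrow}^{k}_{N_{\tau}}$ applied \emph{after} the grafting, which is precisely identity \eqref{non_commutation} read from left to right; you invoke Proposition~\ref{prop_non_com}, but in the wrong logical direction, to relocate raising operators whose action you have already assumed, rather than deriving that action from the coproduct side. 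Second, both of your ``peeling'' inductions require inverting the relation between the concatenation basis and $\star_{2}$-products: $X_{i}\star_{2}\sigma'$ equals $X_{i}\cdot\sigma'$ plus the raising term $\uparrow^{i}\sigma'$, and a forest $\prod_{i}I_{a_{i}}(\sigma_{i})$ is not the iterated $\star_{2}$-product of its factors, so peeling off one factor at a time is only legitimate once the base cases and the associativity of $\star_{2}$ (coassociativity of $\Delta_{2}$) are in place; note also that $\tilde{\uparrow}^{k}_{N_{\tau}}$ as defined in \eqref{splitting_polynomials} carries no binomial weights, so your ``collecting binomial weights'' must in fact cancel the multinomial overcounting produced by iterating single raisings. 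Your overall architecture---planted part first, then polynomials, with Proposition~\ref{prop_non_com} and the Vandermonde identity mediating the interaction---has the right shape and mirrors how the paper later uses the proposition in Theorem~\ref{identification_star_star_2}, but as written the proposal assumes precisely what it is supposed to prove.
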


\begin{theorem} \label{identification_star_star_2}
The product $\star_{2}$ coincides with the product $*$ obtained by the previous construction by unfolding the post-Lie operation onto the universal envelope $U(\mathcal{V}_0)$. One has for every $\sigma =  X^{k}  \prod_i I_{a_{i}}(\sigma_{i}) $
\begin{equs}
	I_{b}(\sigma \star_{2} \tau) = \sigma  \, \widehat{\triangleright} \, I_{b}(  \tau). 
\end{equs}
\end{theorem}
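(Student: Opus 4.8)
The plan is to combine the preceding proposition, which identifies $I_{b}(\sigma\star_{2}\tau)$ with $\tilde{\uparrow}^{k}_{N_{\tau}}\bigl(\prod_i I_{a_{i}}(\sigma_{i})\,\widehat{\curvearrowright}\,I_{b}(\tau)\bigr)$, together with a direct evaluation of the post-Lie product $\sigma\,\widehat{\triangleright}\,I_{b}(\tau)$ via the Guin--Oudom extension recalled in Section~\ref{section::Post_Lie}. Writing $\sigma = X^{k}F$ with $F = \prod_i I_{a_{i}}(\sigma_{i})$, the theorem becomes equivalent to the identity $\sigma\,\widehat{\triangleright}\,I_{b}(\tau) = \tilde{\uparrow}^{k}_{N_{\tau}}\bigl(F\,\widehat{\curvearrowright}\,I_{b}(\tau)\bigr)$, which I would prove by induction on $|k|$.

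For the base case $k=0$ (so $\sigma = F$) I would use that $[.,.]_{0}$ vanishes on $\tilde{\mathcal{V}}$, so the planted trees form an honest pre-Lie algebra under $\widehat{\triangleright} = \widehat{\curvearrowright}$. The extension rules $A\,\widehat{\triangleright}\,BC = \sum_{(A)}(A^{(1)}\widehat{\triangleright}B)(A^{(2)}\widehat{\triangleright}C)$ and $xA\,\widehat{\triangleright}\,y = x\widehat{\triangleright}(A\widehat{\triangleright}y)-(x\widehat{\triangleright}A)\widehat{\triangleright}y$ then restrict, on this pre-Lie sub-structure, to the classical Guin--Oudom extension, whose value $F\,\widehat{\triangleright}\,I_{b}(\tau)$ is exactly the simultaneous multi-grafting $F\,\widehat{\curvearrowright}\,I_{b}(\tau)$ used in the proposition.

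For the inductive step I would peel off one polynomial generator, $\sigma = X_{i}\cdot\rho$ with $\rho = X^{k-e_{i}}F$, and expand $\sigma\,\widehat{\triangleright}\,I_{b}(\tau) = X_{i}\widehat{\triangleright}(\rho\,\widehat{\triangleright}\,I_{b}(\tau)) - (X_{i}\widehat{\triangleright}\rho)\,\widehat{\triangleright}\,I_{b}(\tau)$. By the induction hypothesis $\rho\,\widehat{\triangleright}\,I_{b}(\tau) = \tilde{\uparrow}^{k-e_{i}}_{N_{\tau}}(F\,\widehat{\curvearrowright}\,I_{b}(\tau))$ is a sum of planted trees, so the first term is $\uparrow^{i}$ applied to it, i.e. insertion of $e_{i}$ on every node. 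Since $X_{i}\widehat{\triangleright}X_{j}=0$, the derivation rule $x\,\widehat{\triangleright}\,y_{1}\cdots y_{n} = \sum_{j}y_{1}\cdots(x\widehat{\triangleright}y_{j})\cdots y_{n}$ gives $X_{i}\widehat{\triangleright}\rho = X^{k-e_{i}}(X_{i}\widehat{\triangleright}F)$, where $X_{i}\widehat{\triangleright}F$ is insertion of $e_{i}$ on the nodes of the forest $F$; applying the induction hypothesis to this modified forest identifies the correction term with $\tilde{\uparrow}^{k-e_{i}}_{N_{\tau}}\bigl((X_{i}\widehat{\triangleright}F)\,\widehat{\curvearrowright}\,I_{b}(\tau)\bigr)$.

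The crux is then the same splitting of $\uparrow^{i}$ already exploited in the proof that $(\mathcal{V},[.,.]_{0},\widehat{\triangleright})$ is post-Lie: decomposing $\uparrow^{i}$ on $F\,\widehat{\curvearrowright}\,I_{b}(\tau)$ into insertions on the $\tau$-nodes and insertions on the grafted subtrees, and noting that $\tilde{\uparrow}^{k-e_{i}}_{N_{\tau}}$ touches only $\tau$-nodes and hence commutes with insertion on grafted nodes, the grafted-node part of the first term coincides with the correction term and cancels. What survives is $\uparrow^{i}_{N_{\tau}}\circ\tilde{\uparrow}^{k-e_{i}}_{N_{\tau}}(F\,\widehat{\curvearrowright}\,I_{b}(\tau))$, which equals $\tilde{\uparrow}^{k}_{N_{\tau}}(F\,\widehat{\curvearrowright}\,I_{b}(\tau))$ by the defining recursion of the splitting operator \eqref{splitting_polynomials}. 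I expect the main obstacle to be exactly this last bookkeeping step: one must check that the post-Lie correction term removes precisely the insertions landing on the grafted copies of the $\sigma_{i}$, and that the surviving multiplicities reassemble into $\tilde{\uparrow}^{k}_{N_{\tau}}$ with the correct combinatorial coefficients, i.e. that the ordinary-monomial normalisation of $X^{k}$ in $U(\mathcal{V}_{0})$ matches the distribution of $k$ over the nodes of $\tau$.
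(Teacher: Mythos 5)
Your proposal is correct, and it is not the paper's argument rearranged: the engine is genuinely different. The paper works from the $\star_{2}$ side and only treats the pivotal mixed case $\sigma = X_{i}I_{a}(\hat{\sigma})$ explicitly: it applies Proposition~\ref{prop_non_com} to rewrite $\uparrow^{i}_{N_{\tau}}\bigl(I_{a}(\hat{\sigma})\,\widehat{\curvearrowright}\,I_{b}(\tau)\bigr)$ as $I_{a}(\hat{\sigma})\,\widehat{\curvearrowright}\,I_{b}(\uparrow^{i}\tau) - I_{a-e_{i}}(\hat{\sigma})\,\widehat{\curvearrowright}\,I_{b}(\tau)$, reads both terms as post-Lie expressions with the planted tree peeled off first, and then invokes the relation $I_{a}(\hat{\sigma})X_{i} = X_{i}I_{a}(\hat{\sigma}) + I_{a-e_{i}}(\hat{\sigma})$ in $U(\mathcal{V}_{0})$ to restore normal ordering; the general case is dismissed with ``proceed by induction''. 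You work from the post-Lie side, peel $X_{i}$ from the left (so the element stays in normal order and the bracket is never invoked), and your key mechanism is the cancellation of the Guin--Oudom correction term $(X_{i}\,\widehat{\triangleright}\,\rho)\,\widehat{\triangleright}\,I_{b}(\tau)$ against the insertions landing on the grafted copies of the $\sigma_{j}$; Proposition~\ref{prop_non_com} never appears, its role being absorbed into the node-set splitting of $\uparrow^{i}$ and the well-definedness of $\widehat{\triangleright}$ on $U(\mathcal{V}_{0})$. What the paper's route buys is a direct display of how the deformed commutation identity enters; what yours buys is a uniform induction on $|k|$ valid for arbitrary forests $F$, i.e.\ exactly the step the paper leaves implicit.

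The bookkeeping obstacle you flag at the end is real, and you should resolve it explicitly, because with the literal reading of \eqref{splitting_polynomials} (each decomposition $k=\sum_{v}k_{v}$ counted once) your closing recursion $\uparrow^{i}_{N_{\tau}}\circ\,\tilde{\uparrow}^{k-e_{i}}_{N_{\tau}} = \tilde{\uparrow}^{k}_{N_{\tau}}$ is \emph{false}: for $k=2e_{i}$ and two nodes $u,v$ the left-hand side produces the cross term $\uparrow^{e_{i}}_{u}\uparrow^{e_{i}}_{v}$ with coefficient $2$, the right-hand side with coefficient $1$. The operator must be read with multinomial weights, $\tilde{\uparrow}^{k}_{N_{\tau}} = \prod_{j}\bigl(\uparrow^{j}_{N_{\tau}}\bigr)^{k_{j}}$, and this reading is forced on the theorem itself: associativity of $\star_{2}$ together with the $|k|\le 1$ cases gives $X^{2e_{i}}\star_{2}\tau = X_{i}\star_{2}(X_{i}\star_{2}\tau) = \uparrow^{i}\uparrow^{i}\tau$, which is the multinomial-weighted expression. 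Once $\tilde{\uparrow}^{k}_{N_{\tau}}$ is fixed in this way (as it is, implicitly, in the quoted proposition from the deformation paper), the mutually commuting insertion operators make your recursion immediate, your base case is the definitional identification of the Guin--Oudom extension of $\widehat{\curvearrowright}$ on the pre-Lie subalgebra $\tilde{\mathcal{V}}$ with the forest grafting $\prod_{i}I_{a_{i}}(\sigma_{i})\,\widehat{\curvearrowright}\,I_{b}(\tau)$, and the proof closes.
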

\begin{proof}
 When $ \sigma = X_i $ or $ \sigma = I_a(\hat{\sigma}) $, this is just a consequence of the definition of $ \widehat{\triangleright} $. If $\sigma = X_i I_{a}(\hat{\sigma})$, we make use of Proposition~\ref{prop_non_com} (here $ \tilde{\uparrow}^{e_i} $ = $ \uparrow^{i} $):
\begin{equs}
I_{b}(\sigma \star_{2} \tau) & = \uparrow^{i}_{N_{\tau}} \( I_{a}(\hat{\sigma} ) \, \widehat{\curvearrowright} \, I_{b}(\tau) \)
\\ & =   I_{a}(\hat{\sigma} ) \, \widehat{\curvearrowright} \, I_{b}( \uparrow^{i} \tau) 
-    I_{a-e_i}(\hat{\sigma} ) \, \widehat{\curvearrowright} \, I_{b}(\tau) 
\\ & = I_{a}(\hat{\sigma} ) \, \widehat{\triangleright} \, \( X_i \, \widehat{\triangleright} \, I_{b}(  \tau) \)
-    I_{a-e_i}(\hat{\sigma} ) \, \widehat{\triangleright} \, I_{b}(\tau) 
\\ & = I_{a}(\hat{\sigma} )  X_i \, \widehat{\triangleright} \, I_{b}(  \tau) - \( I_{a}(\hat{\sigma} ) \, \widehat{\triangleright} \,  X_i \) \, \widehat{\triangleright} \, I_{b}(  \tau)
-    I_{a-e_i}(\hat{\sigma} ) \, \widehat{\triangleright} \, I_{b}(\tau)
\\ & = I_{a}(\hat{\sigma} )  X_i \, \widehat{\triangleright} \, I_{b}(  \tau)
-    I_{a-e_i}(\hat{\sigma} ) \, \widehat{\triangleright} \, I_{b}(\tau)
\\ & = X_i I_{a}(\hat{\sigma} )  \, \widehat{\triangleright} \, I_{b}(  \tau)
\end{equs}
where we have used the definition of $ \widehat{\triangleright} $ in the fourth (see the recursive definition in section $2 $) and fifth lines above.
We can then proceed by induction in order to prove it for $ \sigma = X^k \prod_i I_{a_i}(\sigma_i) $ following the recursive definition of the product giving in Section~\ref{section::Post_Lie}.
\end{proof} 
	One may wonder how the post-Lie algebra introduced here relates to the post-Lie algebra on planar rooted trees. In fact, our post-Lie algebra can be interpreted as a quotient of the free object defined in Section~\ref{section::Post_Lie}. Indeed, we can consider the $ X_i $ as terminal edges. Then, we look at the set recursively defined by:
	\begin{equs}
		\mathfrak{T}_p = \Big\{ \Xi \prod_{i} A_i, \, A_i \in  \{ I_a(\tau), \, \tau \in \mathfrak{T}_p, \, a \in \mathbb{N}^{d+1} \}  \cup \{ X_{i} \}_{i = 0,..., d} \Big\}.
	\end{equs} 
 Here the product $ \Pi_i $ is not commutative and therefore $ \mathfrak{T}_p  $ contains planar decorated trees. Below, we provide an example of such decorated trees:
 \begin{equs}
 	\begin{tikzpicture}[scale=0.2,baseline=0.1cm]
 		\node at (0,0)  [dot,label= {[label distance=-0.2em]below: \scriptsize  $     $} ] (root) {};
 		\node at (0,5)  [dot,label= {[label distance=-0.2em]above: \scriptsize  $     $} ] (center) {};
 		\node at (3,4)  [dot,label={[label distance=-0.2em]above: \scriptsize  $  $}] (right) {};
 		\node at (3,8)  [dot,label={[label distance=-0.2em]above: \scriptsize  $  $}] (rightc) {};
 		\node at (-3,4)  [dot,label={[label distance=-0.2em]above: \scriptsize  $ $} ] (left) {};
 			\draw[kernel1] (rightc) to
 		node [sloped,below] {\small }     (right);
 		\draw[kernel1] (right) to
 		node [sloped,below] {\small }     (root); 
 		\draw[kernel1] (center) to
 		node [sloped,below] {\small }     (root); 
 		\draw[kernel1] (left) to
 		node [sloped,below] {\small }     (root);
 		\node at (-2,2) [fill=white,label={[label distance=0em]center: \scriptsize  $ \Xi $} ] () {};
 		\node at (3,6) [fill=white,label={[label distance=0em]center: \scriptsize  $ \Xi $} ] () {};
 		\node at (2,2) [fill=white,label={[label distance=0em]center: \scriptsize  $ a $} ] () {};
 		\node at (0,2.5) [fill=white,label={[label distance=0em]center: \scriptsize  $ X_i $} ] () {};
 	\end{tikzpicture} \neq 	\begin{tikzpicture}[scale=0.2,baseline=0.1cm]
 	\node at (0,0)  [dot,label= {[label distance=-0.2em]below: \scriptsize  $     $} ] (root) {};
 	\node at (0,5)  [dot,label= {[label distance=-0.2em]above: \scriptsize  $     $} ] (center) {};
 	\node at (3,4)  [dot,label={[label distance=-0.2em]above: \scriptsize  $  $}] (right) {};
 	\node at (0,9)  [dot,label={[label distance=-0.2em]above: \scriptsize  $  $}] (rightc) {};
 	\node at (-3,4)  [dot,label={[label distance=-0.2em]above: \scriptsize  $ $} ] (left) {};
 	\draw[kernel1] (right) to
 	node [sloped,below] {\small }     (root); 
 		\draw[kernel1] (center) to
 	node [sloped,below] {\small }     (rightc); 
 	\draw[kernel1] (center) to
 	node [sloped,below] {\small }     (root); 
 	\draw[kernel1] (left) to
 	node [sloped,below] {\small }     (root);
 	\node at (-2,2) [fill=white,label={[label distance=0em]center: \scriptsize  $ \Xi $} ] () {};
 	\node at (2,2) [fill=white,label={[label distance=0em]center: \scriptsize  $ X_i $} ] () {};
 	\node at (0,7) [fill=white,label={[label distance=0em]center: \scriptsize  $ \Xi $} ] () {};
 	\node at (0,2.5) [fill=white,label={[label distance=0em]center: \scriptsize  $ a $} ] () {};
 \end{tikzpicture}.
 \end{equs}
Notice that now there is no decorations on the nodes. Edges decorated by $ X_i $ represent the symbol $ X_i $.
  We define new spaces $ 	\mathcal{V}_p $ and $ 	\tilde{\mathcal{V}}_p $
\begin{equs}
	\mathcal{V}_p & = \Big \langle  \{ I_a(\tau), \, a \in \mathbb{N}^{d+1}, \, \tau \in \mathfrak{T}_p \} \cup \{ X_{i} \}_{i = 0,..., d} \Big \rangle_{\mathbb{R}}, \\
	\tilde{\mathcal{V}}_p & = \Big  \langle \{ I_a(\tau), \, a \in \mathbb{N}^{d+1}, \, \tau \in \mathfrak{T}_p \} \Big \rangle_{\mathbb{R}}.
\end{equs}
One can define the left-most grafing product on these trees denoted by $ \widehat{\triangleright}_l $. One grafts on the left-most spot at the right of the noise $\Xi$. We impose one restriction which is that one cannot graft on top of the $X_i$. This gives
\begin{equs} \label{axioms_left}
	 I_{a}(\tau) \,  \widehat{\triangleright}_l  \, X_{i} = 0, \quad  X_i \, \widehat{\triangleright}_l  \, X_{j} = 0.
\end{equs}
Then, for recovering our post-Lie product, we need to quotient by certain relations that are  given by the Lie bracket \eqref{Lie-bracket} 
\begin{equation}  \label{relation}
	\begin{aligned}
 X_i X_j & = X_j X_i, \quad   I_a(\tau) I_b(\sigma) =I_b(\sigma) I_a(\tau),
\\ I_a(\tau)X_i & = X_i I_a(\tau) +  I_{a-e_i}(\tau).
\end{aligned}
\end{equation}
Using these relations, we work in the basis described by $ \mathfrak{T} $ where now the $ X_{i} $ appear before the planted trees. We shall slightly abuse notation by still using the term left-most grafting for the resulting operation after taking the quotient and will still denote it by $ \widehat{\triangleright}_l $. Then, one has that the left-most grafting product coincides with the post-Lie product $\widehat{\triangleright}$.

\begin{proposition} \label{prop_left_grafting}
	The left-most grafting $\widehat{\triangleright}_l$ coincides with $\widehat{\triangleright}$ on $ \mathcal{V} $.
	\end{proposition}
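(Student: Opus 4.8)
The plan is to use bilinearity: both $\widehat{\triangleright}$ and $\widehat{\triangleright}_l$ are bilinear products on $\mathcal{V}$, and $\mathcal{V}$ is spanned by the planted trees $I_a(\tau)$ together with the monomials $X_i$. Hence it suffices to verify that the two products agree on each of the four types of pairs of generators. The two degenerate cases are immediate: the restriction \eqref{axioms_left} forces $I_a(\tau)\,\widehat{\triangleright}_l\,X_i=0$ and $X_i\,\widehat{\triangleright}_l\,X_j=0$, which matches the defining relations \eqref{axiom_post} of $\widehat{\triangleright}$.

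For the pair $(X_i, I_b(\tau))$, left-most grafting inserts an $X_i$ as the left-most child (to the right of the noise $\Xi$) at each node $v$ of $\tau$. After passing to the quotient, the relation $X_iX_j=X_jX_i$ of \eqref{relation} means that this new $X_i$ simply merges with the decoration $X^{\Labn_v}$ already present at $v$, raising it to $X^{\Labn_v+e_i}$, with no lower-order correction. Summing over all nodes reproduces exactly $\uparrow^i I_b(\tau)=X_i\,\widehat{\triangleright}\,I_b(\tau)$.

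The heart of the matter is the pair $(I_a(\sigma), I_b(\tau))$. Left-most grafting attaches $I_a(\sigma)$ to the left of all children at a node $v$, so that locally one reads $I_a(\sigma)\,X^{\Labn_v}$ followed by the planted subtrees already present at $v$. To bring this into the normal form in which the $X_i$ stand to the left, one must commute $X^{\Labn_v}$ past the freshly grafted factor using the nontrivial relation $I_a(\sigma)X_i=X_iI_a(\sigma)+I_{a-e_i}(\sigma)$ of \eqref{relation}. A straightforward induction on $|k|$, resting on the very Pascal identity $\binom{\Labn_v}{\ell-e_i}+\binom{\Labn_v}{\ell}=\binom{\Labn_v+e_i}{\ell}$ already invoked in Proposition~\ref{prop_non_com}, gives, for $k\in\mathbb{N}^{d+1}$,
\begin{equation*}
I_a(\sigma)\,X^{k}=\sum_{\ell}\binom{k}{\ell}\,X^{k-\ell}\,I_{a-\ell}(\sigma),\qquad \binom{k}{\ell}=\prod_{j}\binom{k_j}{\ell_j},
\end{equation*}
where terms with $\ell\not\le k$ vanish by convention. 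Applying this with $k=\Labn_v$, the factor $X^{\Labn_v-\ell}$ records that the decoration at $v$ has been lowered by $\ell$ (this is $\uparrow_v^{-\ell}$), while $I_{a-\ell}(\sigma)$ grafted at $v$ is exactly $\sigma\curvearrowright_v^{a-\ell}$. Summing over $v$ then yields $\sum_{v\in N_\tau}\sum_{\ell}\binom{\Labn_v}{\ell}\,\sigma\curvearrowright_v^{a-\ell}(\uparrow_v^{-\ell}\tau)$, which is precisely $\sigma\,\widehat{\curvearrowright}^a\tau$; wrapping this inside $I_b$ gives $I_a(\sigma)\,\widehat{\curvearrowright}\,I_b(\tau)=I_a(\sigma)\,\widehat{\triangleright}\,I_b(\tau)$.

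I expect the only genuinely delicate point to be the bookkeeping in this last step: one must check that the left-most grafting descends to a well-defined operation on the quotient, so that the normal-form rewriting is unambiguous, and that commuting the node decorations $X^{\Labn_v}$ leftward reproduces exactly the binomial weights $\binom{\Labn_v}{\ell}$ defining the deformation $\widehat{\curvearrowright}^a$. Once the commutation identity above is in place, the coincidence of the two products is a direct comparison of the resulting sums, and the remaining generator cases are routine.
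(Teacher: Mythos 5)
Your proposal is correct and follows essentially the same route as the paper: both proofs reduce everything to the commutation identity $I_a(\sigma)X^{k}=\sum_{\ell}\binom{k}{\ell}X^{k-\ell}I_{a-\ell}(\sigma)$ obtained from \eqref{relation} (the paper invokes ``\eqref{relation} repeatedly'' where you prove it by induction on $|k|$ via Pascal's identity), and both then recognize the resulting binomial sum as the deformed grafting $\sigma\,\widehat{\curvearrowright}^a\,\tau$, with the degenerate generator cases handled by \eqref{axioms_left} and \eqref{axiom_post}. The only difference is organizational — the paper runs a structural induction on the recursive description of $\mathfrak{T}_p$, matching the recursive definitions of $\widehat{\triangleright}_l$ and $\widehat{\triangleright}$ level by level, whereas you sum directly over all grafting sites and normalize locally at each node — but since the paper's recursion unfolds to exactly that sum over vertices, the two computations coincide.
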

\begin{proof}
	We proceed by induction, using the basis of $\mathcal{V}$ and making use of the recursive construction of $\mathfrak{T}$ afforded to us by the sets $\mathfrak{T}_{p}$ defined above. We want to show that for every $\tau$ and $\sigma$ in $\mathcal{V}$, one has 
	\begin{equs}
		\sigma \, \widehat{\triangleright}_l \, \tau = \sigma \, \widehat{\triangleright} \, \tau. 
	\end{equs}
When $ \tau = X_i $, this is just a consequence of \eqref{axioms_left} and \eqref{axiom_post}. Now, we suppose that $ \tau = I_b(\hat{\tau}) $ where
\begin{equs}
	\hat{\tau} =  \Xi  X^{\ell} \prod_{j=1}^r I_{a_j}(\tau_j)
\end{equs}
Then, one has
\begin{equs} \label{main_1}
	\sigma \, \widehat{\triangleright}_l \, \tau = 
 I_b\left(	\Xi \sigma  X^{\ell} \prod_{j=1}^r I_{a_j}(\tau_j) + \Xi X^{\ell} \sum_{n=1}^r \left( \sigma \, \widehat{\triangleright}_l \, I_{a_n}(\tau_n) \right)  \prod_{j \neq n}  I_{a_j}(\tau_j) \right) 
	\end{equs}
which is the recursive defintion of $ \widehat{\triangleright}_l $.
We apply the induction hypothesis to conclude on the fact that for every $ n \in \{  1,...,r \} $
\begin{equs}
	 \sigma \, \widehat{\triangleright}_l \, I_{a_n}(\tau_n) = 
	  \sigma \, \widehat{\triangleright} \, I_{a_n}(\tau_n). 
\end{equs}
For the first term on the right hand side of \eqref{main_1}, one has if $ \sigma = X_i $
\begin{equs}
	\Xi \sigma  X^{\ell} \prod_{j=1}^r I_{a_j}(\tau_j) 
	& = \Xi X_i  X^{\ell} \prod_{j=1}^r I_{a_j}(\tau_j)
	\\
	& =  \Xi   X^{\ell+e_i} \prod_{j=1}^r I_{a_j}(\tau_j) 
	 = \Xi   \left( X_i  \, \widehat{\triangleright} X^{\ell} \right) \prod_{j=1}^r I_{a_j}(\tau_j).
\end{equs}
If  $ \sigma = I_a(\hat{\sigma}) $ then
\begin{equs}
	\Xi \sigma  X^{\ell} \prod_{j=1}^r I_{a_j}(\tau_j) &  = \Xi  I_a(\hat{\sigma}) X^{\ell} \prod_{j=1}^r I_{a_j}(\tau_j)
	\\ 
	& = \Xi   \sum_{m} {\ell \choose m}   X^{\ell - m}I_{a-m}(\hat{\sigma}) \prod_{j=1}^r I_{a_j}(\tau_j)
\end{equs}
where we have used \eqref{relation} repeatedly. In the end, one obtains:
\begin{equs} 
	\sigma \, \widehat{\triangleright}_l \, \tau = 
	I_b\left(	\Xi  \left(\sigma \, \widehat{\triangleright} \,  X^{\ell} \right) \prod_{j=1}^r I_{a_j}(\tau_j) + \Xi X \sum_{n=1}^r \left( \sigma \, \widehat{\triangleright} \, I_{a_n}(\tau_n) \right)  \prod_{j \neq n}  I_{a_j}(\tau_j) \right) 
\end{equs}
which is exactly the recursive definition of $  \widehat{\triangleright}  $.
	\end{proof}
Below, we illustrate Proposition~\ref{prop_left_grafting}. We first compute the left-most grafting and we use \eqref{relation} 
\begin{equs}
I_a(\Xi) \,	\widehat{\triangleright}_l \,
\begin{tikzpicture}[scale=0.2,baseline=0.1cm]
	\node at (0,0)  [dot,label= {[label distance=-0.2em]below: \scriptsize  $    $} ] (root) {};
	\node at (2,4)  [dot,label={[label distance=-0.2em]above: \scriptsize  $  $}] (right) {};
	\node at (-2,4)  [dot,label={[label distance=-0.2em]above: \scriptsize  $ $} ] (left) {};
	\draw[kernel1] (right) to
	node [sloped,below] {\small }     (root); \draw[kernel1] (left) to
	node [sloped,below] {\small }     (root);
	\node at (-1,2) [fill=white,label={[label distance=0em]center: \scriptsize  $ \Xi $} ] () {};
	\node at (1,2) [fill=white,label={[label distance=0em]center: \scriptsize  $ X_i $} ] () {};
\end{tikzpicture} = \begin{tikzpicture}[scale=0.2,baseline=0.1cm]
\node at (0,0)  [dot,label= {[label distance=-0.2em]below: \scriptsize  $     $} ] (root) {};
\node at (0,5)  [dot,label= {[label distance=-0.2em]above: \scriptsize  $     $} ] (center) {};
\node at (3,4)  [dot,label={[label distance=-0.2em]above: \scriptsize  $  $}] (right) {};
\node at (0,9)  [dot,label={[label distance=-0.2em]above: \scriptsize  $  $}] (rightc) {};
\node at (-3,4)  [dot,label={[label distance=-0.2em]above: \scriptsize  $ $} ] (left) {};
\draw[kernel1] (right) to
node [sloped,below] {\small }     (root); 
\draw[kernel1] (center) to
node [sloped,below] {\small }     (rightc); 
\draw[kernel1] (center) to
node [sloped,below] {\small }     (root); 
\draw[kernel1] (left) to
node [sloped,below] {\small }     (root);
\node at (-2,2) [fill=white,label={[label distance=0em]center: \scriptsize  $ \Xi $} ] () {};
\node at (2,2) [fill=white,label={[label distance=0em]center: \scriptsize  $ X_i $} ] () {};
\node at (0,7) [fill=white,label={[label distance=0em]center: \scriptsize  $ \Xi $} ] () {};
\node at (0,2.5) [fill=white,label={[label distance=0em]center: \scriptsize  $ a $} ] () {};
\end{tikzpicture} = \begin{tikzpicture}[scale=0.2,baseline=0.1cm]
\node at (0,0)  [dot,label= {[label distance=-0.2em]below: \scriptsize  $     $} ] (root) {};
\node at (0,5)  [dot,label= {[label distance=-0.2em]above: \scriptsize  $     $} ] (center) {};
\node at (3,4)  [dot,label={[label distance=-0.2em]above: \scriptsize  $  $}] (right) {};
\node at (3,8)  [dot,label={[label distance=-0.2em]above: \scriptsize  $  $}] (rightc) {};
\node at (-3,4)  [dot,label={[label distance=-0.2em]above: \scriptsize  $ $} ] (left) {};
\draw[kernel1] (rightc) to
node [sloped,below] {\small }     (right);
\draw[kernel1] (right) to
node [sloped,below] {\small }     (root); 
\draw[kernel1] (center) to
node [sloped,below] {\small }     (root); 
\draw[kernel1] (left) to
node [sloped,below] {\small }     (root);
\node at (-2,2) [fill=white,label={[label distance=0em]center: \scriptsize  $ \Xi $} ] () {};
\node at (3,6) [fill=white,label={[label distance=0em]center: \scriptsize  $ \Xi $} ] () {};
\node at (2,2) [fill=white,label={[label distance=0em]center: \scriptsize  $ a $} ] () {};
\node at (0,2.5) [fill=white,label={[label distance=0em]center: \scriptsize  $ X_i $} ] () {};
\end{tikzpicture} + \begin{tikzpicture}[scale=0.2,baseline=0.1cm]
\node at (0,0)  [dot,label= {[label distance=-0.2em]below: \scriptsize  $    $} ] (root) {};
\node at (2,4)  [dot,label={[label distance=-0.2em]above: \scriptsize  $  $}] (right) {};
\node at (2,8)  [dot,label={[label distance=-0.2em]above: \scriptsize  $  $}] (rightc) {};
\node at (-2,4)  [dot,label={[label distance=-0.2em]above: \scriptsize  $ $} ] (left) {};
\draw[kernel1] (right) to
node [sloped,below] {\small }     (root); 
\draw[kernel1] (right) to
node [sloped,below] {\small }     (rightc);
\draw[kernel1] (left) to
node [sloped,below] {\small }     (root);
\node at (-1,2) [fill=white,label={[label distance=0em]center: \scriptsize  $ \Xi $} ] () {};
\node at (2,6) [fill=white,label={[label distance=0em]center: \scriptsize  $ \Xi $} ] () {};
\node at (1,2) [fill=white,label={[label distance=0em]center: \scriptsize  $ \qquad a - e_i $} ] () {};
\end{tikzpicture}.
\end{equs}
On the other  hand, one has
\begin{equs}
	I_a(\Xi) \,	\widehat{\triangleright} \,
	\begin{tikzpicture}[scale=0.2,baseline=0.1cm]
		\node at (0,0)  [dot,label= {[label distance=-0.2em]below: \scriptsize  $  e_i  $} ] (root) {};
		\node at (0,4)  [dot,label={[label distance=-0.2em]above: \scriptsize  $  $}] (right) {};
		\draw[kernel1] (right) to
		node [sloped,below] {\small }     (root);
		\node at (0,2) [fill=white,label={[label distance=0em]center: \scriptsize  $ \Xi $} ] () {};
	\end{tikzpicture} = \begin{tikzpicture}[scale=0.2,baseline=0.1cm]
	\node at (0,0)  [dot,label= {[label distance=-0.2em]below: \scriptsize  $  e_i  $} ] (root) {};
	\node at (2,4)  [dot,label={[label distance=-0.2em]above: \scriptsize  $  $}] (right) {};
	\node at (2,8)  [dot,label={[label distance=-0.2em]above: \scriptsize  $  $}] (rightc) {};
	\node at (-2,4)  [dot,label={[label distance=-0.2em]above: \scriptsize  $ $} ] (left) {};
	\draw[kernel1] (right) to
	node [sloped,below] {\small }     (root); 
	\draw[kernel1] (right) to
	node [sloped,below] {\small }     (rightc);
	\draw[kernel1] (left) to
	node [sloped,below] {\small }     (root);
	\node at (-1,2) [fill=white,label={[label distance=0em]center: \scriptsize  $ \Xi $} ] () {};
	\node at (2,6) [fill=white,label={[label distance=0em]center: \scriptsize  $ \Xi $} ] () {};
	\node at (1,2) [fill=white,label={[label distance=0em]center: \scriptsize  $  a  $} ] () {};
\end{tikzpicture} +  \begin{tikzpicture}[scale=0.2,baseline=0.1cm]
	\node at (0,0)  [dot,label= {[label distance=-0.2em]below: \scriptsize  $    $} ] (root) {};
	\node at (2,4)  [dot,label={[label distance=-0.2em]above: \scriptsize  $  $}] (right) {};
	\node at (2,8)  [dot,label={[label distance=-0.2em]above: \scriptsize  $  $}] (rightc) {};
	\node at (-2,4)  [dot,label={[label distance=-0.2em]above: \scriptsize  $ $} ] (left) {};
	\draw[kernel1] (right) to
	node [sloped,below] {\small }     (root); 
	\draw[kernel1] (right) to
	node [sloped,below] {\small }     (rightc);
	\draw[kernel1] (left) to
	node [sloped,below] {\small }     (root);
	\node at (-1,2) [fill=white,label={[label distance=0em]center: \scriptsize  $ \Xi $} ] () {};
	\node at (2,6) [fill=white,label={[label distance=0em]center: \scriptsize  $ \Xi $} ] () {};
	\node at (1,2) [fill=white,label={[label distance=0em]center: \scriptsize  $ \qquad a - e_i $} ] () {};
\end{tikzpicture}.
\end{equs}
We conclude by using the identification of consecutive edges of the form $ X^k $ at the right location of the noise $ \Xi $ to node decoration $ k $ which gives:
\begin{equs}
\begin{tikzpicture}[scale=0.2,baseline=0.1cm]
	\node at (0,0)  [dot,label= {[label distance=-0.2em]below: \scriptsize  $     $} ] (root) {};
	\node at (0,5)  [dot,label= {[label distance=-0.2em]above: \scriptsize  $     $} ] (center) {};
	\node at (3,4)  [dot,label={[label distance=-0.2em]above: \scriptsize  $  $}] (right) {};
	\node at (3,8)  [dot,label={[label distance=-0.2em]above: \scriptsize  $  $}] (rightc) {};
	\node at (-3,4)  [dot,label={[label distance=-0.2em]above: \scriptsize  $ $} ] (left) {};
	\draw[kernel1] (rightc) to
	node [sloped,below] {\small }     (right);
	\draw[kernel1] (right) to
	node [sloped,below] {\small }     (root); 
	\draw[kernel1] (center) to
	node [sloped,below] {\small }     (root); 
	\draw[kernel1] (left) to
	node [sloped,below] {\small }     (root);
	\node at (-2,2) [fill=white,label={[label distance=0em]center: \scriptsize  $ \Xi $} ] () {};
	\node at (3,6) [fill=white,label={[label distance=0em]center: \scriptsize  $ \Xi $} ] () {};
	\node at (2,2) [fill=white,label={[label distance=0em]center: \scriptsize  $ a $} ] () {};
	\node at (0,2.5) [fill=white,label={[label distance=0em]center: \scriptsize  $ X_i $} ] () {};
\end{tikzpicture} =	\begin{tikzpicture}[scale=0.2,baseline=0.1cm]
		\node at (0,0)  [dot,label= {[label distance=-0.2em]below: \scriptsize  $  e_i  $} ] (root) {};
		\node at (2,4)  [dot,label={[label distance=-0.2em]above: \scriptsize  $  $}] (right) {};
		\node at (2,8)  [dot,label={[label distance=-0.2em]above: \scriptsize  $  $}] (rightc) {};
		\node at (-2,4)  [dot,label={[label distance=-0.2em]above: \scriptsize  $ $} ] (left) {};
		\draw[kernel1] (right) to
		node [sloped,below] {\small }     (root); 
		\draw[kernel1] (right) to
		node [sloped,below] {\small }     (rightc);
		\draw[kernel1] (left) to
		node [sloped,below] {\small }     (root);
		\node at (-1,2) [fill=white,label={[label distance=0em]center: \scriptsize  $ \Xi $} ] () {};
		\node at (2,6) [fill=white,label={[label distance=0em]center: \scriptsize  $ \Xi $} ] () {};
		\node at (1,2) [fill=white,label={[label distance=0em]center: \scriptsize  $  a  $} ] () {};
	\end{tikzpicture}. 
\end{equs}
\section{A post-Lie algebra for multi-indices}

Recall the Lie algebras of derivations on $\mathbb{R}[[z_{k}, 
z_{n}]]$ defined in \cite{LOT}, which are
\begin{equs}
\tilde{L} = \Big \langle \{ z^{\gamma}D^{(n)} \}_{[\gamma] \geq 0} \Big \rangle_{\mathbb{R}}
\end{equs}
which, equipped with the pre-Lie product $ \blacktriangleright $ defined in \cite{LOT} by
\begin{equs}
z^{\gamma}D^{(n)} \blacktriangleright z^{\gamma'}D^{(n')} = \sum_{\beta'}  
(z^{\gamma}D^{(n)})^{\gamma'}_{\beta'} z^{\beta'} D^{(n')}
\end{equs}
is also a pre-Lie algebra. We also consider
\begin{equs}
L = \Big \langle \{ z^{\gamma}D^{(n)} \}_{[\gamma] \geq 0} \cup \{ \partial_{i} \}_{i=0,...,d} \Big \rangle_{\mathbb{R}}.
\end{equs}
Here $\gamma$ is a multi-index and $n \in \mathbb{N}_{0}^{d+1}$. Note that we do not impose the condition $ |\gamma|_{\s} - |n|_{\s} > 0$. The Lie bracket $ [\cdot,\cdot] $ on $ L $ is defined by:
\begin{equation*} 
\begin{aligned}
& [ z^{\gamma}D^{(n)}, z^{\gamma'}D^{(n')} ]  = \sum_{\beta'}  
(z^{\gamma}D^{(n)})^{\gamma'}_{\beta'} z^{\beta'} D^{(n')} - \sum_{\beta}  
(z^{\gamma'}D^{(n')})^{\gamma}_{\beta} z^{\beta} D^{(n)} \\ & [\partial_i,\partial_j]  = 0, \quad [z^{\gamma}D^{(n)}, \partial_i] 
= n_i z^{\gamma}D^{(n-e_i)} - \sum_{\beta} (\partial_i)^{\gamma}_{\beta} z^{\beta} D^{(n)}.
\end{aligned}
\end{equation*}
where the $ \beta $ and  $ \beta' $ are such that $ [\beta] \geq  0, [\beta'] \geq 0 $.

\begin{definition}

The Lie algebra $L_{0}$ is the Lie algebra with underlying vector space $L$ and the Lie bracket $[ x, y]_{0}$ which is defined as $[ x, y]_{0} = 0$ for $x, y \in \tilde{L}$ and $x, y \in \langle \partial_{i} \rangle_{\mathbb{R}}$. We then define 
\begin{equation*}
[z^{\gamma}D^{(n)} , \partial_{i}]_{0} = n_{i}z^{\gamma}D^{(n-e_{i})}.
\end{equation*}
\end{definition}

\begin{remark}

This choice of bracket will directly translate to the fact that we can only partially symmetrize the basis of $U(L)$.

\end{remark}

\begin{remark}
	The space $\tilde{L}$ equipped with $ \blacktriangleright $ corresponds to an example of the natural pre-Lie algebra given in \cite[Sec. 2.1]{Burde}. But the adjunction of the operator $ \partial_i $ changes the structure and can be seen as a non-commutative generalisation. Indeed, the derivative operators $ \partial_i $ and $ D^{(n)} $ do not commute. 
	Such non-commutative relations appear in
	elementary quantum mechanics, see for example \cite[Eq. 10.7]{Wil}.
	\end{remark}

\begin{definition}

We define the product $x \ \widehat{\blacktriangleright} \ y = x \blacktriangleright y$ for all $x,y \in \tilde{L}$. We then set $\partial_{i} \ \widehat{\blacktriangleright} \ z^{\gamma}D^{(n)} = \ \partial_{i} z^{\gamma}D^{(n)}  $ and  $z^{\gamma}D^{(n)} \ \widehat{\blacktriangleright} \ \partial_{i} = 0$ otherwise.

\end{definition}

\begin{theorem}
The space $L$ equipped with the Lie bracket $[ x , y ]_{0}$ and the product $\widehat{\blacktriangleright}$ is a post-Lie algebra. Furthermore, the Lie bracket $[[x, y]] = [x, y]_0 + x  \widehat{\blacktriangleright} y - y \widehat{\blacktriangleright} x$ is equal to the original Lie bracket $[x, y]$ on $L$.
\end{theorem}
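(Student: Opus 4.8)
The plan is to run exactly the same strategy as in the decorated-tree theorem of the previous section, exploiting the fact that the product $\widehat{\blacktriangleright}$ and the truncated bracket $[\,\cdot\,,\cdot\,]_0$ act on ``orthogonal'' pieces of a generator $z^\gamma D^{(n)}$: the product $\widehat{\blacktriangleright}$ only ever modifies the polynomial coefficient $z^\gamma$ (through the differential-operator action of a $\partial_i$ or of another $z^\delta D^{(m)}$), whereas $[\,\cdot\,,\partial_i]_0$ only lowers the derivation index, $D^{(n)}\mapsto n_iD^{(n-e_i)}$. I would first dispatch the easier second assertion, that $[[x,y]]=[x,y]$, by splitting into the three cases $x,y\in\tilde L$, $x,y\in\langle\partial_i\rangle_{\mathbb R}$, and the mixed case $x=z^\gamma D^{(n)}$, $y=\partial_i$. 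In the first case $[x,y]_0=0$ and $x\widehat{\blacktriangleright}y-y\widehat{\blacktriangleright}x=x\blacktriangleright y-y\blacktriangleright x$ is precisely the commutator attached to the pre-Lie product of \cite{LOT}; in the second, both $\widehat{\blacktriangleright}$-terms vanish by definition; in the mixed case $x\widehat{\blacktriangleright}y=0$ while $-y\widehat{\blacktriangleright}x=-\partial_i\widehat{\blacktriangleright}z^\gamma D^{(n)}=-\sum_\beta(\partial_i)^\gamma_\beta z^\beta D^{(n)}$, which together with $[x,y]_0=n_iz^\gamma D^{(n-e_i)}$ reproduces the stated bracket $[z^\gamma D^{(n)},\partial_i]$.

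For the post-Lie axioms I would again argue according to the type of each of $x,y,z$. Since the image of $[\,\cdot\,,\cdot\,]_0$ lies in $\tilde L$ and is nonzero only on a mixed pair, axiom \eqref{ident1_b} (that $x\widehat{\blacktriangleright}(-)$ is a derivation of $[\,\cdot\,,\cdot\,]_0$) reduces to the single identity $\delta\widehat{\blacktriangleright}[z^{\gamma'}D^{(n')},\partial_j]_0=[\delta\widehat{\blacktriangleright}z^{\gamma'}D^{(n')},\partial_j]_0$ for $\delta$ either a generator $z^\gamma D^{(n)}$ or a $\partial_i$. This holds because $[\,\cdot\,,\partial_j]_0$ multiplies by $n'_j$ and sends $D^{(n')}$ to $D^{(n'-e_j)}$ uniformly across the sum, an operation that commutes with the coefficient action of $\delta$; all remaining configurations make at least one side manifestly zero.

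The real content is axiom \eqref{ident2_b}. Here $[x,y]_0\neq0$ forces, up to the antisymmetry of both sides under $x\leftrightarrow y$, the choice $x=z^\gamma D^{(n)}$, $y=\partial_j$, and I would take $z=z^{\gamma'}D^{(n')}$ (the case $z=\partial_k$ collapsing to $0=0$). Using $x\widehat{\blacktriangleright}\partial_j=0$, the two associators expand to $a_{\widehat{\blacktriangleright}}(x,\partial_j,z)=z^\gamma D^{(n)}\big(\partial_j z^{\gamma'}\big)D^{(n')}$ and $a_{\widehat{\blacktriangleright}}(\partial_j,x,z)=\partial_j\widehat{\blacktriangleright}(x\widehat{\blacktriangleright}z)-(\partial_j\widehat{\blacktriangleright}x)\widehat{\blacktriangleright}z$; expanding the first term of the latter by the Leibniz rule for the derivation $\partial_j$ on the coefficient algebra produces exactly the term $(\partial_j\widehat{\blacktriangleright}x)\widehat{\blacktriangleright}z$, which therefore cancels and leaves $z^\gamma\,\partial_j D^{(n)}(z^{\gamma'})D^{(n')}$. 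Subtracting, the difference of associators becomes $z^\gamma\big(D^{(n)}\partial_j-\partial_j D^{(n)}\big)(z^{\gamma'})\,D^{(n')}$, i.e. the commutator of $D^{(n)}$ and $\partial_j$ applied to $z^{\gamma'}$. Invoking the non-commutation relation \eqref{non-commutation_2}, which is exactly this commutator and equals $n_j D^{(n-e_j)}$, the difference equals $n_j z^\gamma D^{(n-e_j)}(z^{\gamma'})D^{(n')}=[x,\partial_j]_0\widehat{\blacktriangleright}z$, as required.

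I expect the main obstacle to be precisely this last computation: carrying out the Leibniz cancellation cleanly and, above all, keeping the ordering of operator composition in \eqref{non-commutation_2} consistent. The matrix-product convention of \cite{LOT} composes the operators $\partial_j$ and $D^{(n)}$ in the reverse order to their action on polynomials, so one must check carefully that the commutator surfacing above carries the $+\,n_j D^{(n-e_j)}$ term rather than its negative; this sign is exactly what is needed for the lower-order term to match $[x,\partial_j]_0\widehat{\blacktriangleright}z$, and it is the structural reason that a genuine post-Lie (rather than merely pre-Lie) product is forced. By contrast, the cases $x,y\in\tilde L$ and $x,y\in\langle\partial_i\rangle_{\mathbb R}$, where $[x,y]_0=0$, require only that the associator of $\widehat{\blacktriangleright}$ be symmetric in its first two arguments; this follows respectively from the pre-Lie identity for $\blacktriangleright$ on $\tilde L$ and from the commutativity $\partial_i\partial_j=\partial_j\partial_i$ of the differential operators, and is routine.
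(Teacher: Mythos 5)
Your proposal is correct and follows essentially the same route as the paper's own proof: the same case analysis according to whether each of $x,y,z$ lies in $\tilde L$ or in $\langle\partial_i\rangle_{\mathbb R}$, reduction of the all-$\tilde L$ case to the pre-Lie identity for $\blacktriangleright$, and the mixed case $x=z^\gamma D^{(n)}$, $y=\partial_j$, $z=z^{\gamma'}D^{(n')}$ as the only computation with real content. In fact you supply more detail than the paper at the crucial point: where the paper disposes of the mixed-case associator difference with the phrase ``after unwinding definitions,'' you make the Leibniz cancellation explicit and correctly flag that \eqref{non-commutation_2} must be read in the matrix (reverse-composition) convention of \cite{LOT} for the lower-order term to appear with the sign $+\,n_j D^{(n-e_j)}$ matching $[x,\partial_j]_0\,\widehat{\blacktriangleright}\,z$.
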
 

\begin{proof}

One carefully checks that the axioms of a post-Lie algebra hold. The proof is similar to the one for decorated trees. More specifically the axioms for a post-Lie algebra are:
\begin{equs} 
x \, \widehat{\blacktriangleright} \, [y,z]_0 = [x \, \widehat{\blacktriangleright} \, y,z]_0 + [y, x \, \widehat{\blacktriangleright} \, z]_0
\end{equs}
and
\begin{equation*}
[x, y]_{0} \, \widehat{\blacktriangleright} \, z = a_{\widehat{\blacktriangleright}}(x,y,z) - a_{\widehat{\blacktriangleright}}(y,x,z).
\end{equation*}
As in the previous section, one easily checks that the first property is true. We prove the second property by distinguishing cases.

\vspace{5pt}

Case 1: If $x, y \in  \langle \partial_{i}  \rangle_{\mathbb{R}}$, then either $z \in \langle \partial_{i}  \rangle_{\mathbb{R}}$ and the property holds trivially, or $z \in \tilde{L}$ and we have $0$ on the left-hand side and only two non-zero terms on the right-hand side that cancel each other out. Hence, the property holds in this case.

Case 2: $x, y \in \tilde{L}$ and $z \in  \langle \partial_{i}  \rangle_{\mathbb{R}}$, then all terms are $0$ by definition of the product $\widehat{\blacktriangleright}$ and the property holds trivially.

Case 3: If $x, y, z \in \tilde{L}$, we notice that, in this case, the second property is equivalent to 
\begin{equation*}
[x, y]_{0} \, \widehat{\blacktriangleright} \, z = a_{{\blacktriangleright}}(x,y,z) - a_{\blacktriangleright}(y,x,z)
\end{equation*}
which is simply the pre-Lie property for the product $\blacktriangleright$.

Case 4: If $y, z \in \tilde{L}$ and $x \in  \langle \partial_{i}  \rangle_{\mathbb{R}}$, then from the left hand side, we have:
\begin{equation*}
[x,y]_0 \ \widehat{\blacktriangleright} \ z = n_{i}z^{\gamma}D^{(n-e_i)} \blacktriangleright z^{\gamma '}D^{(n')}.
\end{equation*}
From the right hand side, the following terms remain:
\begin{equation*}
\begin{aligned}
a_{\widehat{\blacktriangleright}}(x,y,z)  & = z^{\gamma}D^{(n)} \, \widehat{\blacktriangleright} \, ( X_i \, \widehat{\blacktriangleright} \, z^{\gamma '}D^{(n')}) - (z^{\gamma}D^{(n)} \, \widehat{\blacktriangleright} \, X_i) \, \widehat{\blacktriangleright} \, z^{\gamma '}D^{(n')}
\\ & =  z^{\gamma}D^{(n)} \, \widehat{\blacktriangleright} \, (\partial_{i} z^{\gamma '}D^{n'})
\end{aligned}
\end{equation*}
because $z^{\gamma}D^{(n)} \, \widehat{\blacktriangleright} \, \partial_{i} = 0  $. Then
\begin{equation*}
\begin{aligned}
 a_{\widehat{\blacktriangleright}}(y,x,z) & = \partial_{i} \, \widehat{\blacktriangleright} \, (z^{\gamma}D^{(n)})  \, \widehat{\blacktriangleright} \, z^{\gamma '}D^{(n')}) - (\partial_{i} \, \widehat{\blacktriangleright} \, z^{\gamma}D^{(n)}) \, \widehat{\blacktriangleright} \, z^{\gamma '}D^{(n')} \\
 & = \ \partial_{i}(z^{\gamma}D^{(n)} \blacktriangleright z^{\gamma '}D^{(n')} ) - (\partial_{i}z^{\gamma}D^{(n)})\blacktriangleright z^{\gamma '}D^{(n')}.
 \end{aligned}
\end{equation*}
Finally, one obtains
\begin{equs}
a_{\widehat{\blacktriangleright}}(x,y,z)  - a_{\widehat{\blacktriangleright}}(y,x,z) &  =  z^{\gamma}D^{(n)} \, \blacktriangleright \,  (\partial_{i} z^{\gamma '}D^{(n'}) - \partial_{i}(z^{\gamma}D^{(n)} \blacktriangleright z^{\gamma '}D^{(n')} )  
  \\ & + (\partial_{i}z^{\gamma}D^{(n)})\blacktriangleright z^{\gamma '}D^{(n')}
\end{equs}
which is equal to $n_{i}z^{\gamma}D^{(n-e_i)} \blacktriangleright z^{\gamma '}D^{(n')}$ after unwinding definitions, as desired.

\vspace{5pt}

 Since any remaining cases are covered by symmetry we have a post-Lie algebra. To finish the proof, we notice that the second claim of the lemma is immediate from the definitions of the brackets and the product $\widehat{\blacktriangleright}$.

\end{proof}

\begin{remark} \label{post_Lie_morphism} The map $ \Psi $ defined \eqref{pre_Lie_isomorphism} can be extended to a pre-Lie morphism between the vector space $ 	\tilde{\mathcal{V}}_0 $ given by
	\begin{equs}
	\tilde{\mathcal{V}}_0 & = \Big  \langle \{ I_a(\tau), \, a \in \mathbb{N}^{d+1}, \, \tau \in \mathfrak{T}_0 \} \Big \rangle_{\mathbb{R}}
	\end{equs}
	 and $ \tilde{L} $. We denote this extension by $ \hat{\Psi} $ and it is given for every $ I_a(\sigma) \in \tilde{\mathcal{V}}_0 $ by:
	 \begin{equs}
	 	\hat{\Psi}(I_a(\sigma)) =\Psi(\sigma) \frac{D^{(a)}}{a!}.
	 \end{equs}  
 One also has
 \begin{equs}
 	\hat{\Psi}(  	I_a(\sigma) \, \widehat{\curvearrowright} \, I_b(\tau)   )  = 	\hat{\Psi}(  	I_a(\sigma)  )\blacktriangleright \hat{\Psi}(I_b(\tau))
 \end{equs}
where the pre-Lie product on the decorated trees is the one given by \eqref{pre_lie_bis}. To obtain a post-Lie morphism, one has to extend the map $ \hat{\Psi} $ on the $ X_i $ by setting:
\begin{equs}
	\hat{\Psi}(X_i) = \partial_i. 
\end{equs}
For example, one easily checks that 
\begin{equs}
	\hat{\Psi}(X_i \, \widehat{\triangleright} \, I_{a}(\sigma)) = \hat{\Psi}(\uparrow^{i} I_{a}(\sigma)) = \partial_{i} \,  \widehat{\blacktriangleright} \, \Psi(\sigma)\frac{D^{(a)}}{a!} = \hat{\Psi}(X_i) \, \widehat{\triangleright} \, \hat{\Psi}(I_{a}(\sigma))
\end{equs}
where here $ \uparrow^{i} $ applied to a decoration $ \prod_{j=1}^{k} X^{\ell_j} $ gives:
\begin{equs}
	\uparrow^{i}  \prod_{j=1}^{k} X^{\ell_j} =  X_i \prod_{j=1}^{k} X^{\ell_j} + \sum_{j=1}^k X^{\ell_{j} + e_i} \prod_{j' \neq j} X^{\ell_{j'}}
\end{equs}
Similarly for the bracket, one gets:
\begin{equs}
	\hat{\Psi}([X_i, I_{a}(\sigma)]) = \hat{\Psi}(I_{a-e_{i}}(\sigma)) =  \Psi(\sigma)\frac{D^{(a-e_{i})}}{(a-e_i)!} = [\hat{\Psi}(X_i), \hat{\Psi}(I_{a}(\sigma))].
\end{equs}
Due to the condition $ [\gamma] \geq 0 $, for any element of $ \tilde{L} $, there exists a pre-image in $ \mathcal{V}_0 $ via
$ \hat{\Psi} $.
	\end{remark}

We are now able to use the main theorem in \cite{ELM} which generalizes the Guin-Oudom theorem on the Lie enveloping algebra of a pre-Lie algebra to the case of a post-Lie algebra in order to obtain the following result:

\begin{theorem}
\label{main_result_multi_indices}
One has a Hopf algebra isomorphism of $U(L)$ with $(U(L_0), \star, \Delta)$. Furthermore, this isomorphism immediately gives us a partially symmetric basis for $U(L)$ as the one obtained in \cite{LOT}.

\end{theorem}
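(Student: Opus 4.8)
The statement contains two assertions, and I would treat them in turn. For the isomorphism, the plan is to invoke Theorem~\ref{main_theorem_section_2} directly, taking for the abstract post-Lie algebra $\mathfrak{g}$ the triple $(L,[.,.]_0,\widehat{\blacktriangleright})$ whose post-Lie structure was established in the preceding theorem, so that $U(\mathfrak{g})=U(L_0)$. That theorem furnishes a Hopf isomorphism $(U(L_0),\star,\Delta)\cong U(\bar{\mathfrak{g}})$, where $\bar{\mathfrak{g}}$ carries the derived bracket $[[.,.]]$. Since we have just verified that $[[x,y]]=[x,y]_0+x\,\widehat{\blacktriangleright}\,y-y\,\widehat{\blacktriangleright}\,x$ agrees with the original bracket $[.,.]$ on $L$, we may identify $\bar{\mathfrak{g}}$ with $L$ and conclude $(U(L_0),\star,\Delta)\cong U(L)$. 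No fresh computation is required here: all the content is carried by the post-Lie axioms already checked, exactly as in the decorated-tree case where Theorem~\ref{main_result_trees} is a one-line application of the same machinery.

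For the partially symmetric basis, the plan is to exploit that $[.,.]_0$ vanishes on $\tilde{L}$. First I would note that the $z^{\gamma}D^{(n)}$ commute pairwise for $[.,.]_0$, so their image in $U(L_0)$ spans a commutative subalgebra (a copy of the symmetric algebra on $\tilde{L}$), while the $\partial_i$ likewise commute among themselves. Applying the Poincaré--Birkhoff--Witt theorem to $L_0$, with the $\partial_i$ ordered by their index $i$ and placed to the left, then produces a vector-space basis of $U(L_0)$ consisting of monomials $\prod_{i=0}^d \partial_i^{m_i}\, z^{\gamma_1}D^{(n_1)}\cdots z^{\gamma_r}D^{(n_r)}$ in which the $\tilde{L}$-factors appear symmetrically. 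Transporting this basis through the isomorphism of the first part yields a basis of $U(L)$ that is symmetric with respect to the $z^{\gamma}D^{(n)}$, which is precisely the basis obtained in \cite{LOT}; it is the multi-index counterpart of the basis $B_{(\mathbf{F},\mathbf{m})}$ displayed after Theorem~\ref{main_result_trees}.

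The delicate point, and the one I expect to require genuine care, is the qualifier \emph{partially}. Because $[z^{\gamma}D^{(n)},\partial_i]_0=n_i\,z^{\gamma}D^{(n-e_i)}$ is nonzero and lands back inside $\tilde{L}$, the two families of generators do not commute even in $U(L_0)$, so reordering the $\partial_i$ to the left is not free: each transposition generates a correction term of the form $z^{\gamma}D^{(n-e_i)}$. I would control these by a grading/triangularity argument, the correction being of strictly lower order in the homogeneity $|\cdot|_{\s}$ (in the spirit of the remark following Proposition~\ref{prop_non_com}), which guarantees both that the displayed monomials remain linearly independent and that symmetrization is legitimate only on the $\tilde{L}$-block, never across the $\partial_i$. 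Finally, to pin down that the transported basis is literally the one of \cite{LOT}, I would use the post-Lie morphism $\hat{\Psi}$ of Remark~\ref{post_Lie_morphism}, whose surjectivity onto $\tilde{L}$ (ensured by the condition $[\gamma]\geq 0$) identifies the multi-index generators with images of planted decorated trees and hence matches the two bases across the formalisms.
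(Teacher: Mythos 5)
Your proposal is correct and follows the paper's own proof essentially verbatim: the isomorphism is a direct application of Theorem~\ref{main_theorem_section_2} to the post-Lie algebra $(L,[.,.]_0,\widehat{\blacktriangleright})$ together with the identification $[[.,.]]=[.,.]$, and the partially symmetric basis comes from the Poincar\'e--Birkhoff--Witt theorem combined with the vanishing of $[.,.]_0$ on $\tilde{L}$. The only difference is your final paragraph, which is superfluous: the PBW theorem for $L_0$ already holds regardless of the nonzero cross-brackets $[z^{\gamma}D^{(n)},\partial_i]_0 = n_i z^{\gamma}D^{(n-e_i)}$, so the ordered monomials with the $\partial_i$ on the left form a basis without any additional grading or triangularity argument, and symmetry in the $\tilde{L}$-factors is then automatic from their commuting in $U(L_0)$.
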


\begin{proof}
The first part is obtained directly by using the result in \cite{ELM}. Then, for the second part, we simply notice that $[x, y]_0$ vanishes on $\tilde{L}$ which results in the relation $xy = yx$ holding for $x, y \in \tilde{L}$ inside $U(L)$. This, together with the Poincare-Birkhoff-Witt theorem, gives us a basis for $U(L)$ that is symmetric with respect to the elements of the basis of $\tilde{L}$.
\end{proof}

\begin{remark}
	Using the post-Lie morphism $ \hat{\Psi} $, one has
	a direct connection between the product on multi-indices and the one on decorated trees. Indeed, the map $ \hat{\Psi} $ is in particular a Lie morphism for $[.,.]$ and so has a natural extension to $U(\mathfrak{g})$. Hence, it commutes with the construction in Section~\ref{section::Post_Lie}. By virtue of this, it extends to a morphism of Hopf algebras and so one has for  $ A, B \in U(L_0)$ 
	\begin{equs}
		A  \star B = 	\hat{\Psi}(x)   \star
		 \hat{\Psi}(y) = \hat{\Psi} (  x   *
		 y )
		\end{equs}
	where $ x, y \in U(\mathcal{V}_0)  $ are such that $ A = \hat{\Psi}(x) $ and $ B = \hat{\Psi}(y) $.
	\end{remark}

\end{document}